\DeclareFontFamily{U} {cmr}{}
\DeclareFontShape{U}{cmr}{m}{n}{
	<-6> cmr5
	<6-7> cmr6
	<7-8> cmr7
	<8-9> cmr8
	<9-10> cmr9
	<10-12> cmr10
	<12-> cmr12}{}
\DeclareSymbolFont{Xcmr} {U} {cmr}{m}{n}
\DeclareMathSymbol{\Delta}{\mathord}{Xcmr}{'001}
\DeclareMathSymbol{\Upsilon}{\mathord}{Xcmr}{'007}
\DeclareMathSymbol{\Omega}{\mathord}{Xcmr}{'012}
\setlist[itemize]{topsep=0ex,itemsep=0ex,parsep=0.4ex}
\setlist[enumerate]{topsep=0ex,itemsep=0ex,parsep=0.4ex}
\declaretheorem[name = Theorem, numberwithin = section, style = plain]{theorem}
\declaretheorem[name = Corollary, numberlike = theorem, style = plain]{corollary}
\declaretheorem[name = Conjecture, numberlike = theorem, style = plain]{conjecture}
\declaretheorem[name = Definition, numberlike = theorem, style = definition]{definition}
\declaretheorem[name = Lemma, numberlike = theorem, style = plain]{lemma}
\declaretheorem[name = Problem, numberlike = theorem, style = plain]{problem}
\crefname{claim}{Claim}{Claims}
\Crefname{claim}{Claim}{Claims}
\crefname{conjecture}{Conjecture}{Conjectures}
\Crefname{conjecture}{Conjecture}{Conjectures}
\newcommand{\defn}[1]{\textcolor{Maroon}{\emph{#1}}}
\DeclareFontFamily{U}{matha}{\hyphenchar\font45}
\DeclareFontShape{U}{matha}{m}{n}{
	<5> <6> <7> <8> <9> <10> gen * matha
	<10.95> matha10 <12> <14.4> <17.28> <20.74> <24.88> matha12
}{}
\DeclareSymbolFont{matha}{U}{matha}{m}{n}
\DeclareMathSymbol{\specialuparrow}{\mathrel}{matha}{"D2}
\DeclareMathSymbol{\specialrightarrow}{\mathrel}{matha}{"D1}
\renewcommand*{\backref}[1]{}
\renewcommand*{\backrefalt}[4]{
	\ifcase #1 Not cited.%
	\or $\specialuparrow$#2%
	\else $\specialuparrow$#2%
	\fi%
}
\newcommand{\eps}{\ensuremath{\varepsilon}}
\let\originalleft\left
\let\originalright\right
\renewcommand{\left}{\mathopen{}\mathclose\bgroup\originalleft}
\renewcommand{\right}{\aftergroup\egroup\originalright}
\renewcommand{\l}{\left}
\renewcommand{\r}{\right}
\newcommand{\st}{\ifnum\currentgrouptype=16 \mathrel{}\middle|\mathrel{}\else\mathrel{|}\fi}
\DeclarePairedDelimiter{\abs}{\lvert}{\rvert}
\DeclarePairedDelimiter{\floor}{\lfloor}{\rfloor}
\DeclarePairedDelimiter{\set}{\{}{\}}
\newcommand{\subs}{\subseteq}
\newcommand{\sm}{\setminus}
\newcommand{\sdiff}{\bigtriangleup}
\newcommand{\DeclareMath}[2]{\newcommand{#1}{\mathnormal{#2}}}
\DeclareMath{\N}{\mathbb{N}}
\DeclareMath{\calO}{\mathcal{O}}
\DeclareMathOperator{\pr}{\mathbb{P}}
\DeclareMathOperator{\ex}{\mathbb{E}}
\renewcommand*{\ge}{\geqslant}
\renewcommand*{\geq}{\geqslant}
\renewcommand*{\le}{\leqslant}
\renewcommand*{\leq}{\leqslant}
\title{Flashes and rainbows in tournaments}
\author{Ant\'{o}nio Gir\~{a}o\footnotemark[2]
\qquad Freddie Illingworth\footnotemark[2]
\qquad Lukas Michel\footnotemark[2]\\
Michael Savery\footnotemark[2]~\footnotemark[3]
\qquad Alex Scott\footnotemark[2]}
\date{1 June 2023}
\begin{document}
\maketitle

\begin{abstract}
    \noindent Colour the edges of the complete graph with vertex set $\set{1, 2, \dotsc, n}$ with an arbitrary number of colours. What is the smallest integer $f(l,k)$ such that if $n > f(l,k)$ then there must exist a monotone monochromatic path of length $l$ or a monotone rainbow path of length $k$? Lefmann, R\"{o}dl, and Thomas conjectured in 1992 that  $f(l, k) = l^{k - 1}$ and proved this for $l \ge (3 k)^{2 k}$. We prove the conjecture for $l \geq k^3 (\log k)^{1 + o(1)}$ and establish the general upper bound $f(l, k) \leq k (\log k)^{1 + o(1)} \cdot l^{k - 1}$. This reduces the gap between the best lower and upper bounds from exponential to polynomial in $k$. We also generalise some of these results to the tournament setting.
\end{abstract}
  
\renewcommand{\thefootnote}{\fnsymbol{footnote}} 

\footnotetext[0]{\emph{2020 MSC}: 05C55 (generalised Ramsey theory), 05C35 (extremal problems), 05C38 (paths and cycles).}

\footnotetext[2]{Mathematical Institute, University of Oxford, United Kingdom (\textsf{\{\href{mailto:girao@maths.ox.ac.uk}{girao},\href{mailto:illingworth@maths.ox.ac.uk}{illingworth},\href{mailto:michel@maths.ox.ac.uk}{michel},\href{mailto:savery@maths.ox.ac.uk}{savery},\allowbreak \href{mailto:scott@maths.ox.ac.uk}{scott}\}@maths.ox.ac.uk}). Research of AG, FI, and AS supported by EPSRC grant EP/V007327/1.}
\footnotetext[3]{Heilbronn Institute for Mathematical Research, Bristol, UK.}

\renewcommand{\thefootnote}{\arabic{footnote}} 

\section{Introduction}

In 1930, Ramsey~\cite{R1930} showed that every $k$-colouring of the edges of a very large clique contains a large monochromatic sub-clique. This classical theorem was the starting seed for Ramsey theory, and there have been numerous extensions and generalisations of it since then. One of the most important is the canonical Ramsey theorem of Erd\H{o}s and Rado~\cite{ER1950}, which considers colourings that allow an arbitrary number of colours. It states that every colouring of the edges of a very large clique with an arbitrary number of colours contains a large sub-clique with one of four types of colourings.

\begin{theorem}[Erd\H{o}s, Rado]
    There is a function $f(k)$ such that every colouring of the edges of the complete graph on $\{1,2,\ldots, f(k)\}$ contains a complete $k$-vertex subgraph whose colouring is of one of the following four canonical types\textup{:}
    \begin{itemize}
        \item rainbow --- no two edges have the same colour;
        \item monochromatic --- all edges have the same colour;
        \item upper lexical --- two edges have the same colour if and only if they end at the same vertex;
        \item lower lexical --- two edges have the same colour if and only if they start at the same vertex.
    \end{itemize} 
\end{theorem}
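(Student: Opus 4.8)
The plan is to derive the canonical Ramsey theorem from the ordinary hypergraph Ramsey theorem by recording, for each small set of vertices, only the \emph{pattern} of colour-coincidences among its edges. Fix a colouring $\chi$ of the edges of the clique on $\set{1, \dotsc, n}$. For a $4$-set $S = \set{a < b < c < d}$, let $\phi(S)$ be the partition of its six edges $\set{ab, ac, ad, bc, bd, cd}$ into classes of equal $\chi$-colour, recorded abstractly via the positions $(1,2), (1,3), \dotsc, (3,4)$ in sorted order. Since a $6$-element set has only $B_6 = 203$ partitions (the Bell number), $\phi$ is a colouring of the $4$-subsets with a bounded number of colours. Applying Ramsey's theorem for $4$-uniform hypergraphs, if $n$ is large enough in terms of $k$ then there is a set $M$ with $\abs{M} \ge k$ (indeed as large as we wish) on which $\phi$ is constant, equal to some fixed pattern $P$. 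Crucially, any two distinct edges span at most four vertices (four if disjoint, three if they share an endpoint), so $P$ already determines, for \emph{every} pair of edges with endpoints in $M$, whether they receive the same colour.

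Next I would show that $P$ must be one of the four canonical patterns. The key step is a collapsing lemma: if $P$ declares any two \emph{disjoint} edges equal in colour, then $\chi$ is monochromatic on $M$. Indeed, by choosing a third edge lying entirely to the right of two given edges and chaining equalities (using transitivity of $\chi$ across overlapping $4$-sets), one connects the colours of any two edges of $M$. Assuming no disjoint coincidence, I would then read off the induced pattern on a triangle $a < b < c$; consistency across the choice of a fourth vertex forces this triangle pattern to be the same in all positions, hence uniform over $M$. The admissible triangle patterns are: all three colours distinct; $\chi(ab) = \chi(ac)$ only (common lower endpoint); $\chi(ac) = \chi(bc)$ only (common upper endpoint); or all three equal. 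The ``common middle vertex'' coincidence $\chi(ab) = \chi(bc)$ and the all-equal case both force a disjoint coincidence after one more step (e.g.\ $\chi(ab) = \chi(bc) = \chi(cd)$ gives $\chi(ab) = \chi(cd)$), so by the lemma they collapse to the monochromatic type. The three surviving options lift to exactly the rainbow, lower lexical, and upper lexical colourings on $M$: in the lower endpoint case, edges with equal lower endpoints agree while edges with distinct lower endpoints must differ (equality would again produce either a disjoint or a middle-vertex coincidence), and symmetrically for upper endpoints; in the all-distinct case every pair of edges --- adjacent or disjoint --- receives distinct colours.

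Taking $\abs{M} \ge k$ then yields a $k$-vertex clique coloured in one of the four canonical types, and $f(k)$ can be set to the relevant $4$-uniform Ramsey number, which is finite. I expect the main obstacle to be the classification of $P$ rather than the Ramsey reduction: one must verify carefully that the finitely many homogeneous patterns really do reduce to these four, which means checking that each apparently intermediate coincidence propagates, via overlapping $4$-sets and transitivity of colour-equality, into a configuration the collapsing lemma already handles. Keeping track of which positions within $P$ correspond to adjacent versus disjoint edge pairs, and ensuring the triangle pattern is genuinely position-independent, is where the care is needed; once these consistency checks are in place the four canonical types emerge cleanly.
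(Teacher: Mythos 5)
The paper itself contains no proof of this statement: it is quoted as classical background, attributed with a citation to Erd\H{o}s and Rado, and used only as motivation for the monotone-path corollary that the paper actually studies. So there is nothing in the paper to compare your argument against, and it must be judged on its own. What you propose is the standard derivation of the canonical Ramsey theorem from Ramsey's theorem for $4$-uniform hypergraphs: colour each $4$-set by the coincidence pattern of its six edges (at most $B_6 = 203$ patterns), extract a homogeneous set $M$, observe that since any two edges span at most four vertices the constant pattern $P$ decides every colour coincidence inside $M$, and then classify $P$. This outline is sound, the consistency issues you flag (position-independence of the triangle pattern, agreement of $P$ across different completions of a $3$-set) are genuine but routine, since $P$ must always record the actual truth about $\chi$, and the route does prove the theorem, albeit with only a tower-type bound on $f(k)$ coming from the $4$-uniform Ramsey number --- which is fine, as the statement asserts mere existence (the paper cites Lefmann--R\"odl separately for the quantitative bound $k^{Ck^2}$).

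The one place your sketch is thinner than it should be is the collapsing lemma. Disjoint pairs inside a $4$-set $\{a<b<c<d\}$ occur in three configurations --- $ab/cd$, $ac/bd$, and $ad/bc$ --- and your chaining argument (take a third edge entirely to the right of both given edges) applies verbatim only to the first. If the declared coincidence is, say, the interleaved one, $\chi(ac)=\chi(bd)$ for all $a<b<c<d$, then a far-right edge is not in that configuration with either given edge, and no equality is declared. The fix is the propagation you allude to: on five points $v_1<\dotsb<v_5$, applying the pattern to $\{v_1,v_2,v_3,v_5\}$ and to $\{v_1,v_2,v_4,v_5\}$ gives $\chi(v_1v_3)=\chi(v_2v_5)=\chi(v_1v_4)$, so $P$ must also declare the common-lower-endpoint coincidence; this makes $\chi(ab)$ depend only on the lower endpoint, and the interleaved coincidence then identifies all these values, yielding monochromatic. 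The enclosed configuration $ad/bc$ collapses by an identical five-point argument. Since you explicitly flag this as the place where care is needed, I would call it an incomplete step in a correct and standard proof rather than a gap in the approach.
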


Lefmann and R\"odl~\cite{LR1995} gave the best current bounds for this result, showing that we can take $f(k) = k^{Ck^2}$ for some constant $C$. The canonical Ramsey theorem has an immediate corollary for monotone paths. It implies that there is an integer $f(l, k)$ such that if $n > f(l, k)$ then every colouring $c \colon \tbinom{[n]}{2} \to \N$ yields either:
\begin{itemize}
    \item $x_0 < \dotsb < x_l$ with $c(x_0 x_1), c(x_1 x_2), \dotsc, c(x_{l - 1} x_l)$ all the same (an \defn{$l$-flash}), or
    \item $y_0 < \dotsb < y_k$ with $c(y_0 y_1), c(y_1 y_2), \dotsc, c(y_{k - 1} y_k)$ all distinct (a \defn{$k$-rainbow}).
\end{itemize}
Lefmann and R\"{o}dl's bound implies that $f(k,k) \leq k^{Ck^2}$.

In 1992, Lefmann, R\"{o}dl, and Thomas~\cite{LRT1992} gave better bounds on $f(l,k)$. From below, they provided a nice construction showing that $f(l, k) \geq l^{k - 1}$: label the vertices with strings from $\set{1, 2, \dotsc, l}^{k - 1}$ in lexicographic order and, for a pair of vertices $u < v$, let $c(u v)$ be any index at which $u$ has a smaller value than $v$. It is not difficult to see that this construction contains no $l$-flash or $k$-rainbow, and in fact it is best possible if only $k - 1$ colours are used. They made the attractive conjecture that this lower bound is tight in general.

\begin{conjecture}[Lefmann, R\"{o}dl, Thomas]\label{conj:fequality}
    For all positive integers $l$ and $k$, $f(l, k) = l^{k - 1}$.
\end{conjecture}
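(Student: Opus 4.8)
Since the construction recalled just above already yields the lower bound $f(l,k)\ge l^{k-1}$, the whole content of the conjecture is the matching upper bound: every colouring $c\colon\binom{[n]}{2}\to\N$ with $n>l^{k-1}$ must contain an $l$-flash or a $k$-rainbow. The plan is to prove the contrapositive by producing an explicit injective labelling. Assuming $c$ has no $l$-flash and no $k$-rainbow, I would attach to each vertex $v$ a vector $\Phi(v)\in\{0,\dots,l-1\}^{k-1}$ and show $\Phi$ is injective; since the target set has size exactly $l^{k-1}$, this forces $n\le l^{k-1}$. This mirrors the tight construction, where the vertex labels are precisely the points of a grid of side $l$ in $k-1$ dimensions, so the task is to reconstruct such coordinates canonically from an arbitrary colouring using only the two forbidden-path hypotheses.

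The coordinates should record flash lengths sliced by rainbow depth. For a vertex $v$ let $R(v)\in\{0,\dots,k-1\}$ be the length of the longest monotone rainbow path ending at $v$ (at most $k-1$, as there is no $k$-rainbow). For each $j$ I would set $\Phi_j(v)$ to be the length of the longest monochromatic suffix over all monotone paths ending at $v$ of rainbow depth exactly $j$; absence of an $l$-flash keeps every such suffix in $\{0,\dots,l-1\}$, so the range has size exactly $l$ as required. The property I would aim to establish is that, along any edge $uv$ with $u<v$ and colour $\alpha=c(uv)$, the edge either extends a maximum rainbow path to $u$ avoiding $\alpha$ (so $R(v)=R(u)+1$ and a fresh block of length $1$ opens at the new depth) or leaves $R(v)=R(u)$ (so every maximum rainbow path to $u$ already uses $\alpha$, and the terminal block at depth $R(u)$ lengthens). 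If one coordinate strictly increased while the lower ones stayed frozen, injectivity would follow at once.

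The delicate point --- and, I expect, the genuine obstacle --- is that this clean dichotomy can fail when a colour disappears and later reappears out of order along a path. Appending $uv$ may open a length-one block at an \emph{already-used} depth without advancing the rainbow depth, so the vector need not increase coordinatewise and, worse, a naive $\Phi_j$ can take values in a range of size $l+O(\text{error})$ rather than $l$, giving only $f(l,k)\le(l+\text{error})^{k-1}$. Controlling reappearing colours is exactly where one pays: a uniform but lossy bound on the slack yields the general estimate $f(l,k)\le k(\log k)^{1+o(1)}\cdot l^{k-1}$, and when $l$ is large relative to $k$ the multiplicative slack is absorbed, recovering the exact value for $l\ge k^3(\log k)^{1+o(1)}$. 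I would therefore concentrate on a sharper accounting --- choosing the canonical rainbow path greedily so colours are introduced as late as possible, or an amortised argument charging each block-reset to a genuine depth increase elsewhere --- since removing the slack for \emph{all} $l$ is precisely what separates the near-tight theorems from the full conjecture.

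As a cross-check and possible alternative I would also attempt the inductive inequality $f(l,k)\le l\cdot f(l,k-1)$ with base case $f(l,1)=1$, which telescopes to $l^{k-1}$: from no $l$-flash on more than $l\cdot f(l,k-1)$ vertices one peels off one colour layer and applies induction to extend a $(k-1)$-rainbow. Making the peeling lose no vertices is again the range-control issue, so I expect the two routes to stand or fall together, with the exact step being the hard inequality.
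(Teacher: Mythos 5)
The statement you set out to prove is not a theorem of this paper but an open conjecture: the paper establishes the equality $f(l,k)=l^{k-1}$ only when $l \ge C_{\eps} k^3 (\log k)^{1+\eps}$ (\cref{thm:fequality}), and in general proves only the weaker bound $f(l,k) \le D_{\eps} k (\log k)^{1+\eps} \cdot l^{k-1}$ (\cref{thm:fupperbound}); the full conjecture is explicitly stated as open for $l$ small compared to $k$. Your proposal does not close this gap either, and you concede as much: your third paragraph admits that the ``slack'' caused by colours that disappear and later reappear is uncontrolled, and that removing it ``is precisely what separates the near-tight theorems from the full conjecture.'' An argument that ends by identifying the remaining obstacle is a research programme, not a proof, so there is a genuine gap --- indeed, the gap is the entire open problem.

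It is worth pinning down exactly where your labelling scheme fails, because the failure is structural rather than technical. Indexing the coordinates of $\Phi(v)$ by rainbow depth $j$ instead of by colour is the natural way to get exactly $k-1$ coordinates when the palette is unbounded, but then the required monotonicity is simply false: appending an edge $uv$ can open a fresh length-one monochromatic block at a depth $j < R(u)$ while changing neither $R$ nor any other coordinate, so distinct vertices can receive identical vectors and injectivity fails outright. (The colour-indexed version of this argument is exactly why the grid construction is optimal among colourings using at most $k-1$ colours, as noted in the paper's introduction; the whole difficulty of the conjecture is that with many colours there is no canonical set of $k-1$ indices.) Your claim that a ``lossy'' version of the scheme recovers the bound of \cref{thm:fupperbound} is also unsubstantiated: the paper obtains that bound by a completely different mechanism, namely recursive bounds on the tournament function $t(l,k)$ using $r$-in-robust vertices and a probabilistic slicing argument (\cref{lem:cinoutcols,lem:trecursiveupperbound,lem:grecursiveupperbound,cor:tgupperbounds}). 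Likewise, in the regime where the equality \emph{is} proved, the paper does not use any injective labelling: it finds a strongly $(k-2)$-robust vertex (\cref{lem:sizestronglyrobust}), deduces that at most $k-1$ colours occur in $2$-flashes (\cref{lem:stronglyrobustflash}), and then reduces via \cref{lem:fewcolflash} to the known base case $f(2,k)=2^{k-1}$ of Lefmann, R\"{o}dl, and Thomas (\cref{thm:t2equality}). Finally, your fallback inequality $f(l,k) \le l \cdot f(l,k-1)$ is not known and would itself imply the conjecture by telescoping; the recursion the paper can actually prove (\cref{lem:trecursiveupperbound}) carries unavoidable additional terms, which is precisely why only approximate results are available.
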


Lefmann, R\"{o}dl, and Thomas gave support for their conjecture by proving that it is true for the small cases $l \leq 2$ or $k \leq 4$ as well as when $l$ is at least factorial in $k$ (specifically, when $l \geq (3k)^{2k}$). Our first result proves \cref{conj:fequality} provided $l$ is at least polynomial in $k$ (specifically, when $l \geq k^3 (\log k)^{1 + o(1)}$).

\begin{theorem}\label{thm:fequality}
    For all $\eps > 0$, there is $C_{\eps}$ such that if $l \geq C_{\eps} k^3 (\log k)^{1 + \eps}$, then $f(l, k) = l^{k - 1}$.
\end{theorem}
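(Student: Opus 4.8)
Since the matching lower bound $f(l,k)\ge l^{k-1}$ is already supplied by the construction above, the whole task is the upper bound: any colouring $c\colon\binom{[n]}{2}\to\N$ with neither an $l$-flash nor a $k$-rainbow must satisfy $n\le l^{k-1}$. The plan is to attach to each vertex a label in the cube $\{0,1,\dotsc,l-1\}^{k-1}$ and prove that the resulting map $[n]\to\{0,\dotsc,l-1\}^{k-1}$ is injective. Since that cube has exactly $l^{k-1}$ points, this would give the bound with no slack whatsoever, which is what the exact equality demands (and why an argument losing any multiplicative constant is not enough).

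The two natural invariants are, for a vertex $v$ and colour $\alpha$, the \emph{flash depth} $d_\alpha(v)$, the length of the longest monochromatic-$\alpha$ monotone path ending at $v$, and the \emph{rainbow depth} $\rho(v)$, the length of the longest rainbow monotone path ending at $v$. The hypotheses say $d_\alpha(v)\le l-1$ and $\rho(v)\le k-1$ for all $v,\alpha$, so morally these should combine to $l^{k-1}$: a rainbow path passes through at most $k-1$ distinct colours, and in each colour the flash depth ranges over $l$ values, exactly mirroring the $k-1$ coordinates of the extremal construction. Concretely I would try to peel off one rainbow dimension at a time: identify a distinguished colour role whose flash depth defines a first coordinate taking $\le l$ values, and show that fixing this coordinate partitions $[n]$ into at most $l$ classes each containing no $(k-1)$-rainbow, giving $f(l,k)\le l\cdot f(l,k-1)$ and hence $l^{k-1}$ by induction on $k$. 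The obstruction to doing this naively is that the edge-colours are unbounded in number while the label has only $k-1$ coordinate slots, so one must assign the colours to a bounded set of slots such that (i) along every rainbow path the colours occupy distinct slots, and (ii) within each slot the flash depth never exceeds $l-1$.

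\textbf{Main obstacle.} There is no canonical way to pack arbitrarily many colours into so few slots so that \emph{every} rainbow path sees them as distinct; this is precisely the gap between the easy construction and the conjecture. I would resolve it probabilistically, imposing a uniformly random structure on the colour set and bounding, by a union bound, the probability that some rainbow path suffers a slot-collision. A single rainbow path on $\le k$ colours avoids a collision once the number of available slots is of order $k^2$ (a birthday-type estimate), and paying the further factors needed to union bound over all vertices and all $k-1$ peeling levels pushes the requirement to $l\ge C_\eps\,k^3(\log k)^{1+\eps}$; at that threshold the probabilistic method produces a single assignment valid for every vertex simultaneously, the labels genuinely land in $\{0,\dotsc,l-1\}^{k-1}$, and $n\le l^{k-1}$ follows with no loss. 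When $l$ is smaller one cannot force the collisions away and must allow extra slots, which is exactly what inflates the bound to the general estimate $f(l,k)\le k(\log k)^{1+o(1)}\,l^{k-1}$; the content of the theorem is that once $l$ dominates the cubic-in-$k$ threshold the loss disappears and the clean cube suffices. The step I expect to be most delicate is organising the union bound over rainbow paths, of which there are far too many to handle directly: the real work will be to recast it as a per-vertex or per-level collision bound, and it is there that the $\log k$ factor and the cubic dependence on $k$ are spent.
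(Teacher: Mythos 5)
Your plan has a genuine gap at its core, and it shows up in two places. First, the slot scheme is internally inconsistent. The birthday-type estimate you invoke needs the number of slots $s$ to be of order $k^2$ before even a single fixed rainbow walk avoids a slot-collision with constant probability; but an injective label built from per-slot flash depths then lives in $\{0,\dotsc,l-1\}^{s}$, giving only $n\le l^{\Theta(k^2)}$, not $n\le l^{k-1}$. You assert that once $l\ge C_{\eps}k^3(\log k)^{1+\eps}$ the labels ``genuinely land in $\{0,\dotsc,l-1\}^{k-1}$'', but no mechanism is given for compressing $\Theta(k^2)$ slots down to $k-1$, and none can come from taking $l$ large: whether two colours collide in a slot depends only on $k$ and $s$, never on $l$, so the hypothesis on $l$ is being spent in a place where it cannot help. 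Second, even granting a collision-free assignment, injectivity fails. You never say what the coordinate of a slot $S$ at a vertex $v$ is, and the natural candidates (the maximum or the sum of the flash depths $d_\beta(v)$ over colours $\beta\in S$) are not monotone along edges: if the edge $uv$ has colour $\alpha\in S$ then $d_\alpha(u)<d_\alpha(v)$, but the other colours of $S$ are unconstrained and can make the slot coordinate decrease from $u$ to $v$. This per-colour monotonicity is exactly what makes the $(k-1)$-colour case trivial, and it is destroyed by merging colours --- that is the heart of the problem, not a technicality. Relatedly, the union bound you defer (``recast it as a per-vertex or per-level collision bound'') is the missing idea rather than a routine step: there are exponentially many rainbow walks, so constant per-walk success probability cannot survive any direct union bound. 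Note finally that your peeling recursion $f(l,k)\le l\cdot f(l,k-1)$, if it could be established, would prove \cref{conj:fequality} in full for all $l$ and $k$, with no lower bound on $l$ at all --- a sign that the proposed route proves too much and that the hypothesis on $l$ is not really being used.

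The paper's route is structurally different and worth internalising, because it shows where exactness can realistically come from: it is ``lossy reduction plus exact base case'', not a lossless global labelling. Using the machinery of \cref{sec:boundstournaments} (the bounds on $t(l,k-1)$ and on the in-robust-vertex threshold $g(l,k,r)$ from \cref{cor:tgupperbounds}), the paper shows in \cref{lem:sizestronglyrobust} that any tournament with no $l$-flash or $k$-rainbow on at least roughly $k^3(\log k)^{1+\eps}l^{k-2}$ vertices contains a strongly $(k-2)$-robust vertex; by \cref{lem:stronglyrobustflash} such a vertex forces every $2$-flash in the whole tournament to take its colour from a fixed set of $k-1$ colours; and then a probabilistic rescaling (\cref{lem:fewcolflash} with $m=2$ and $c=k-1$) gives $\abs{V}\le t(2,k)\cdot(l/2)^{k-1}$. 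The only exact ingredient is the Lefmann--R\"{o}dl--Thomas equality $f(2,k)=2^{k-1}$ (\cref{thm:t2equality}) --- the single point where transitivity is used --- which makes this last bound equal to $l^{k-1}$ with zero loss, while all the lossy steps are confined to a separate term of order $k^3(\log k)^{1+\eps}l^{k-2}$ that is dominated by $l^{k-1}$ precisely when $l\ge C_{\eps}k^3(\log k)^{1+\eps}$. That is how the hypothesis on $l$ is actually spent. Any repair of your approach would likewise need an exact anchor playing the role of $f(2,k)=2^{k-1}$ (your scheme has none), together with a reduction whose losses are of lower order in $l$.
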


The best general upper bound on $f(l,k)$ was given by Jiang and Mubayi~\cite{JM2000}.

\begin{theorem}[Jiang, Mubayi]\label{thm:fasymptotic}
    For all integers $l \geq 1$ and $k \geq 4$,
    \begin{equation*}
        f(l, k) \le \biggl(1 + \frac{1}{\sqrt{l}}\biggr)^{k - 4} \cdot l^{k - 1}.
    \end{equation*}
\end{theorem}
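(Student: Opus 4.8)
The plan is to prove the bound by induction on $k$, reducing everything to the single recursive inequality
\begin{equation*}
 f(l,k) \le \Bigl(1 + \tfrac{1}{\sqrt{l}}\Bigr)\, l \cdot f(l, k-1) \qquad (k \ge 5),
\end{equation*}
and then telescoping down to the base case $f(l,4) \le l^3$. The base case is exactly the $k \le 4$ instance of \cref{conj:fequality}, already established by Lefmann, R\"odl, and Thomas and quoted in the introduction, so I may assume it. Applying the recursive inequality $k-4$ times gives $f(l,k) \le \bigl[(1+l^{-1/2})\,l\bigr]^{k-4} f(l,4) \le (1+l^{-1/2})^{k-4}\, l^{k-4}\cdot l^{3}$, which is precisely $(1+1/\sqrt l)^{k-4} l^{k-1}$; at $k=4$ the empty product recovers the base case directly, which is what fixes the exponent as $k-4$.

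To prove the recursive inequality I would argue in partition form. Fix a colouring $c\colon \binom{[n]}{2} \to \N$ with no $l$-flash and no $k$-rainbow; the goal is $n \le (1+1/\sqrt l)\, l \cdot f(l,k-1)$. It suffices to partition $[n]$ into at most $(1+1/\sqrt l)\, l$ classes, each inducing a colouring with no $l$-flash and no $(k-1)$-rainbow: by the definition of $f(l,k-1)$ each such class has at most $f(l,k-1)$ vertices, and summing over classes gives the bound. Thus the whole problem reduces to covering $[n]$ by few vertex sets of rainbow-depth at most $k-2$.

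To build such a partition I would use flash-depth statistics. For a colour $c$ let $d_c(v)$ be the number of edges in the longest monotone colour-$c$ path ending at $v$; the absence of an $l$-flash gives $d_c(v) \le l-1$, and $d_c$ strictly increases along every colour-$c$ edge, so each level set $\set{v : d_c(v)=t}$ contains no colour-$c$ edge at all. Consequently, on a positive level set of $d_c$ a colour-$c$ edge enters each vertex from below while the internal edges avoid colour $c$; hence a $(k-1)$-rainbow inside it would avoid $c$, and \emph{prepending} such an entering $c$-edge would produce a $k$-rainbow, a contradiction, so the class has no $(k-1)$-rainbow. In the extremal $(k-1)$-colour construction this is trivial: splitting by the first coordinate strips a single colour and yields exactly $l$ classes. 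The difficulty in general is that a colouring may use arbitrarily many colours, so no single colour is stripped by all $(k-1)$-rainbows, and the flash-depth of one colour covers only the vertices it touches.

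I expect this last point to be the main obstacle: one must combine the flash-depths of many colours into a single global potential on $[n]$ that takes only about $l$ values yet whose level sets each drop the rainbow-depth by one. I would attempt this through an Erd\H{o}s--Szekeres/Dilworth-type layering that tracks monochromatic and rainbow progress simultaneously, then optimise the number of layers. Balancing the few long monochromatic runs (each pushing a coordinate close to $l$) against the many short ones is a convexity/AM--GM calculation whose optimum corresponds to a rectangle of area roughly $l$, producing $l + O(\sqrt{l})$ layers rather than $l$; this additive $\sqrt l$ slack is exactly the source of the $(1+1/\sqrt l)$ factor, and is precisely what separates this bound from the conjectured clean value $l^{k-1}$. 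Once the layering is in place, checking that each class is genuinely $(k-1)$-rainbow-free is routine via the prepending argument above.
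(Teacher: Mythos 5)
This theorem is not proved in the paper at all: it is quoted from Jiang and Mubayi \cite{JM2000}, so your proposal has to stand on its own as a complete argument, and as written it is not one. The scaffolding is fine: telescoping the claimed recursion $f(l,k) \le (1+1/\sqrt{l})\,l \cdot f(l,k-1)$ down to the base case $f(l,4) \le l^3$ (the $k \le 4$ case of \cref{conj:fequality}, due to Lefmann, R\"{o}dl, and Thomas) does give exactly $(1+1/\sqrt{l})^{k-4} l^{k-1}$. Your single-colour observation is also correct: for a fixed colour $c$, each \emph{positive} level set of the flash-depth $d_c$ contains no colour-$c$ edge, every vertex in it has an incoming colour-$c$ edge, and prepending that edge to any internal $(k-1)$-rainbow would create a $k$-rainbow, so such a level set has no $(k-1)$-rainbow. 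But this controls only the $l-1$ positive level sets of one colour. The level set $\{v : d_c(v) = 0\}$ --- the vertices with no incoming colour-$c$ edge --- can be almost all of $[n]$ (for instance when every colour appears on only a bounded number of edges), and no single colour does better.

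The entire content of the theorem is therefore concentrated in the step you explicitly leave open: producing a single global potential on $[n]$ with only about $l + \sqrt{l}$ values whose level sets each have no $(k-1)$-rainbow. That is not a technical loose end to be discharged by routine optimisation --- it \emph{is} the theorem. Your proposed route (an ``Erd\H{o}s--Szekeres/Dilworth-type layering'' optimised by ``convexity/AM--GM'') names no potential function, no layering construction, and no quantity to optimise, so there is nothing that can be verified or repaired; and the intermediate statement you reduce to, that $[n]$ can always be partitioned into at most $l + \sqrt{l}$ classes each free of $l$-flashes and $(k-1)$-rainbows, is at least as strong as the recursion itself and is given no proof. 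In short, the proposal correctly locates the difficulty, sets up a plausible induction around it, and then does not cross it: there is a genuine gap at the heart of the argument, and the theorem should be treated as unproved by this attempt.
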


This confirms \cref{conj:fequality} asymptotically when $l = \omega(k^2)$. However, for $l$ fixed and $k$ large this differs from the lower bound by a factor exponential in $k$. We provide an upper bound in which this factor is only polynomial in $k$.

\begin{theorem}\label{thm:fupperbound}
    For all $\eps > 0$, there is $D_{\eps}$ such that for all positive integers $l$ and $k$,
    \begin{equation*}
        f(l, k) \leq D_{\eps} k (\log k)^{1 + \eps} \cdot l^{k - 1}.
    \end{equation*}
\end{theorem}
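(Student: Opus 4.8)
The plan is to suppose, for contradiction, that the edges of the complete graph on $[n]$ are coloured with no $l$-flash and no $k$-rainbow, and to injectively label the vertices into a structured set of size at most $D_\eps k (\log k)^{1 + \eps} \cdot l^{k - 1}$. The starting point is a monotonicity observation that pins down the correct shape $l^{k - 1}$. For each colour $\alpha$ and vertex $v$, let $p_\alpha(v)$ be the number of edges in a longest monochromatic monotone path of colour $\alpha$ ending at $v$; since there is no $l$-flash, $p_\alpha(v) \in \{0, 1, \dots, l - 1\}$. If $uv$ is an edge with $u < v$ and colour $\beta$, then any $\beta$-monochromatic path ending at $u$ extends through $v$, so $p_\beta(v) \ge p_\beta(u) + 1$. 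Hence the colour of $uv$ is a coordinate in which the progress vector strictly increases, and the map $v \mapsto (p_\alpha(v))_\alpha$ is injective. So $n$ is bounded by the number of distinct progress vectors, and it remains to control this count.

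The difficulty is that the progress vectors live in a space whose dimension is the (arbitrarily large) number of colours: a single vertex may have many colours active at progress $1$, for instance many in-edges of distinct colours, and the no-rainbow hypothesis does not obviously forbid this, because those colours share the endpoint $v$ and so need not lie on a common increasing path. A naive count over supports therefore vastly overshoots $l^{k - 1}$, and the real content is to replace the full progress vector by a label of the correct dimension. I would decompose $[n]$ by rainbow depth: let $R(v) \le k - 1$ be the maximum number of distinct colours on a monotone path ending at $v$, a quantity that is non-decreasing along every edge, and set $V_r = \{v : R(v) = r\}$. The aim is a telescoping bound in which each increment of rainbow depth contributes a factor of essentially $l$, so that $\abs{V_r}$ is comparable to $l^r$ and the top level dominates with $\abs{V_{k - 1}}$ of order $l^{k - 1}$.

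The main obstacle, and the source of the $k (\log k)^{1 + \eps}$ loss, is that the rainbow depth does not decrement cleanly: the colour of an edge joining two vertices of the same level, or the colour witnessing a jump from level $r - 1$ to level $r$, is not canonically determined, so one cannot simply read off a single mono-progress coordinate per level and obtain an exact $l^r$ count. To handle this I would introduce a randomised assignment of the colours to a bounded number of slots (equivalently, random real weights on the colours) and argue that, along the at most $k - 1$ colours appearing on any relevant increasing path, the chosen slots are distinct with good probability; the number of slots is the parameter to optimise. Carried out level by level, this contributes a multiplicative overhead of the form $1 + a_r$ at depth $r$, and the crucial quantitative point is to arrange $a_r$ of order $1/r$, so that $\prod_{r \le k} (1 + a_r) \le \exp(\sum_r a_r)$ is of order $k$ up to logarithmic factors. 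The exponent $1 + \eps$ on $\log k$ then arises from pushing a borderline-convergent sum (of the type $\sum_r 1 / (r (\log r)^{1 + \eps})$) to bound the total overhead, with $D_\eps$ absorbing the dependence on $\eps$.

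Finally I would assemble the pieces: the randomised slotting should yield, after discarding a controlled set of ``collision'' vertices at each level, an injection of the surviving vertices into $\{0, 1, \dots, l - 1\}^{k - 1}$ together with a tag of size $k (\log k)^{1 + \eps}$ recording the slot data, giving $n \le D_\eps k (\log k)^{1 + \eps} \cdot l^{k - 1}$. A sanity check is that on the extremal construction behind \cref{conj:fequality} the colours are globally $1, \dots, k - 1$, the slots can be taken to be the colours themselves, the progress vector recovers the lexicographic label exactly, and no loss is incurred. I expect essentially all the work to lie in the probabilistic dimension-reduction step: the monotonicity and depth-decomposition steps are routine, and optimising the number of slots is a short calculation once the per-level estimate is in place.
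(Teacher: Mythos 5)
Your proposal has a genuine gap at its foundation. The level decomposition hinges on a quantity $R(v)$ that is simultaneously (i) non-decreasing along every edge and (ii) bounded above by $k-1$ whenever there is no $k$-rainbow, but no such quantity is available, and this tension is precisely the central difficulty of the problem. As you define it (the maximum number of \emph{distinct colours} on a monotone path ending at $v$), $R$ is indeed monotone along edges, but it is \emph{not} bounded by $k-1$: take $k=3$, vertices $0<1<2<3<4$, and colours $c(01)=a$, $c(12)=c(23)=b$, $c(34)=c$, $c(02)=b$, $c(13)=c(24)=c(03)=c(14)=c(04)=a$. The five monotone paths of length $3$ have colour sequences $(a,b,b)$, $(a,b,a)$, $(a,a,c)$, $(b,b,c)$, $(b,b,c)$, so there is no $3$-rainbow, yet the path $0,1,2,3,4$ carries the three distinct colours $a,b,c$, so $R(4)=3>k-1$. (A path with many distinct colours need not contain a rainbow, because passing to a subsequence of its vertices replaces path edges by chords of unknown colour.) If instead you define $R(v)$ as the length of the longest \emph{rainbow} ending at $v$, then (ii) holds but (i) fails: extending an $r$-rainbow ending at $u$ across an edge $uv$ only gives an $(r+1)$-rainbow when the colour of $uv$ avoids the $r$ colours already used. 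So either reading of your rainbow depth breaks the telescoping you need.

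The paper's proof (of the stronger tournament statement, \cref{thm:tupperbound}) is built exactly around repairing this failure of monotonicity: it introduces $r$-\emph{in-robust} vertices, which admit, for \emph{every} colour $a$, an $r$-rainbow ending at them that avoids $a$, so that rainbows ending there can always be extended across an outgoing edge. This requires a second extremal function $g(l,k,r)$ (the threshold forcing an $l$-flash, a $k$-rainbow, or an $r$-in-robust vertex), an ordering that maximises forward edges so the robust vertex dominates half of what follows it, and the interlocking recursions of \cref{lem:trecursiveupperbound,lem:grecursiveupperbound}; the factor $k(\log k)^{1+\eps}$ then emerges in \cref{cor:tgupperbounds} from a recursion that reduces $k$ to $k-r=\Theta(\log k)$, not from a sum of per-level overheads $1+\calO(1/r)$. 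By contrast, your second step --- the ``probabilistic dimension-reduction'' into slots, with a claimed per-level overhead $1+a_r$, $a_r=\calO(1/r)$ --- is where you yourself locate essentially all the work, and it is not carried out: there is no definition of the slots, no per-level estimate, and no mechanism offered that would survive the failure of the depth decomposition above. (Your opening observation, that $l_a(u)<l_a(v)$ across an edge of colour $a$ and hence the progress vector is injective, does match the paper's starting point, e.g.\ in \cref{lem:cinoutcols}; but everything after that point is missing or rests on the false claim.)
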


\Cref{thm:fequality,thm:fasymptotic,thm:fupperbound} are each best in different ranges of values of $l$ and $k$. For $l \geq k^3 (\log k)^{1 + o(1)}$, \cref{thm:fequality} is best possible, for $l = \calO((k/\log k)^2)$, \cref{thm:fupperbound} provides the best bound known, and \cref{thm:fasymptotic} is best in the intermediate regime.

Flashes and rainbows generalise very naturally to tournaments, where a \defn{tournament} is an orientation of a complete graph. A \defn{directed walk} in a tournament is a sequence of vertices $x_0, \dots, x_l$ such that $x_{i-1} x_i$ is a directed edge for all $1 \le i \le l$ (note that walks are allowed to repeat vertices and edges).

\begin{definition}
    Let $l$ and $k$ be non-negative integers. An \defn{$l$-flash} in a tournament is a directed walk $x_0, \dotsc, x_l$ with $c(x_0 x_1), \dotsc, c(x_{l - 1} x_l)$ all the same. A \defn{$k$-rainbow} is a directed walk $y_0, \dotsc, y_k$ with $c(y_0 y_1), \dotsc, c(y_{k - 1} y_k)$ all distinct.
\end{definition}

Let $t(l,k)$ denote the smallest integer such that every colouring of the edges of any tournament with more than $t(l,k)$ vertices contains an $l$-flash or a $k$-rainbow. \Cref{conj:fequality} can now be viewed as the restriction of this problem to \emph{transitive} tournaments and thus we have
\begin{equation*}
    t(l, k) \geq f(l, k) \geq l^{k - 1}.
\end{equation*}
We conjecture that the tournament strengthening of \cref{conj:fequality} holds.

\begin{conjecture}\label{conj:tequality}
    For all positive integers $l$ and $k$, $t(l, k) = l^{k - 1}$.
\end{conjecture}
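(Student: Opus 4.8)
The lower bound $t(l, k) \geq l^{k - 1}$ is already in hand: transitive tournaments are tournaments, so $t(l, k) \geq f(l, k) \geq l^{k - 1}$. The whole content is therefore the matching upper bound $t(l, k) \leq l^{k - 1}$, and the plan is to port the machinery behind \cref{thm:fequality,thm:fupperbound} to arbitrary tournaments. The first and most useful observation is that, because flashes are directed \emph{walks}, forbidding an $l$-flash is far stronger for tournaments than for transitive tournaments: a monochromatic directed cycle could be traversed repeatedly to produce monochromatic walks of every length. Hence if $T$ has no $l$-flash then every colour class $G_{\alpha}$ must be acyclic, and moreover its longest directed path has at most $l - 1$ edges. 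This lets me define, for each vertex $v$ and colour $\alpha$, the height $a_{\alpha}(v) \in \set{0, 1, \dotsc, l - 1}$ equal to the number of edges in the longest $\alpha$-monochromatic path ending at $v$; these values are finite and satisfy $a_{\alpha}(u) < a_{\alpha}(v)$ along every $\alpha$-edge $uv$, exactly as in the transitive setting.

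With the monochromatic heights in place, I would next strip out the cyclic structure. Let $S_1 \to \dotsb \to S_m$ be the strongly connected components of $T$ listed in the (linear) order of the condensation, so that every edge between distinct components points forwards. Writing $\rho(v)$ for the largest number of distinct colours on a directed walk ending at $v$, forbidding a $k$-rainbow gives $\rho(v) \leq k - 1$; since any two vertices of a common component reach each other, $\rho$ is constant on each $S_i$ and non-decreasing along the order $S_1, \dotsc, S_m$. On the condensation the tournament behaves transitively, so the idea is to run the injective encoding $V \to \set{0, 1, \dotsc, l - 1}^{k - 1}$ underlying \cref{thm:fupperbound} --- assigning to each vertex a vector whose coordinates record, level by level along a canonical maximum rainbow ending at $v$, the relevant monochromatic heights $a_{\alpha}$ --- and to argue that this encoding stays injective once the components are contracted to points.

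The main obstacle is precisely the strongly connected components, where there is no order to exploit and walks may revisit vertices, so the clean ``height strictly increases along the walk'' bookkeeping breaks down. Within a component $S$ one still knows a great deal --- $S$ is strongly connected and so has a Hamilton cycle, every colour class meeting $S$ is acyclic, and every walk inside $S$ sees at most $k - 1$ distinct colours --- but turning this into the statement that $S$ consumes only its ``fair share'' of the $l^{k - 1}$ budget is the hard part, and is where a genuinely new idea beyond the transitive proof seems to be needed. I expect the difficulty to mirror the transitive case: just as \cref{thm:fequality} attains the exact value $l^{k - 1}$ only once $l$ is polynomially large in $k$, while \cref{thm:fupperbound} pays a $k (\log k)^{1 + o(1)}$ overhead for small $l$, a first attack on \cref{conj:tequality} would likely establish $t(l, k) = l^{k - 1}$ only for $l$ large, with a polynomial-in-$k$ overhead otherwise, leaving the full conjecture for all $l$ and $k$ as the delicate endpoint.
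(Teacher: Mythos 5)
This statement is \cref{conj:tequality}, which the paper does \emph{not} prove: it is stated as a conjecture, and \cref{sec:openproblems} says explicitly that it ``remains wide open for any $l \ge 2$ and $k \ge 3$''. So there is no proof in the paper to compare against, and your proposal does not close the gap either. You establish only the lower bound $t(l,k) \ge f(l,k) \ge l^{k-1}$, which is immediate from the definitions and already stated in the introduction, and then sketch a plan for the upper bound whose decisive step --- showing that strongly connected components consume only their ``fair share'' of the $l^{k-1}$ budget --- you yourself flag as requiring ``a genuinely new idea''. A plan whose hard step is admitted to be missing is a gap, not a proof. For calibration: the paper's actual results in this direction are the weaker upper bounds \cref{thm:tasymptotic,thm:tupperbound}, each carrying a multiplicative overhead, plus the conditional reduction \cref{cor:tboundwitht2}, which shows that \emph{if} the $l=2$ case $t(2,k)=2^{k-1}$ of the conjecture held, then the conjecture would follow for all $l \ge k^3 (\log k)^{1+o(1)}$. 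That $l=2$ case is itself open; the transitive analogue \cref{thm:t2equality} is known, but (as the paper notes) its proof exploits rainbows starting at the \emph{first} vertex of the transitive order, a notion with no counterpart in a strongly connected tournament.

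Beyond the acknowledged gap, one step of your setup is concretely false. You define $\rho(v)$ as the largest number of distinct colours on a directed walk ending at $v$ and claim that forbidding $k$-rainbows gives $\rho(v) \le k-1$. A $k$-rainbow is a walk of $k$ \emph{consecutive} edges with pairwise distinct colours; a walk may accumulate arbitrarily many distinct colours while never containing even three consecutive pairwise-distinct ones. For example, once $l \ge 3$, a walk whose colour sequence is $1,1,2,2,3,3,\dotsc,m,m$ contains no $3$-rainbow (any three consecutive edges repeat a colour) yet sees $m$ distinct colours, and nothing in the hypotheses excludes such walks. If you instead define $\rho(v)$ as the length of the longest rainbow ending at $v$, then $\rho(v) \le k-1$ does hold, but $\rho$ is then no longer non-decreasing along walks (extending a rainbow can destroy it), so the claim that $\rho$ is constant on strong components and monotone along the condensation fails. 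Either way, the bookkeeping on which your condensation argument rests collapses, and the core difficulty remains exactly where you located it: bounding the number of vertices \emph{inside} a strong component, which contracting the condensation to a transitive tournament simply discards.
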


We prove two upper bounds on $t(l, k)$. The first, \cref{thm:tasymptotic}, is better when $l$ is large compared to $k$ and confirms \cref{conj:tequality} asymptotically for $l = \omega(k^2)$. Although the bound looks very similar to \cref{thm:fasymptotic}, the proof is quite different and uses a probabilistic argument.

\begin{theorem}\label{thm:tasymptotic}
    For all positive integers $l$ and $k$, $t(l, k) \leq \bigl(1 + \frac{2}{\sqrt{l}}\bigr)^k \cdot l^{k-1}$.
\end{theorem}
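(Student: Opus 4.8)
The plan is to prove the bound $t(l,k) \le (1 + 2/\sqrt{l})^k \cdot l^{k-1}$ via a probabilistic argument. The goal is to show that if a tournament $T$ has $n > (1+2/\sqrt{l})^k \cdot l^{k-1}$ vertices and we have a colouring $c$ with no $l$-flash and no $k$-rainbow, we reach a contradiction. The natural parameter to track is, for each vertex $v$, the set of ``rainbow profiles'' of directed walks ending at $v$ — but since walks may repeat vertices and edges, the key quantity to control is something like the number of distinct colours one can collect along directed walks terminating at each vertex. Let me set up a potential function that assigns to each vertex a weight measuring how ``rich'' the directed walks ending there are, and then use a random/averaging argument over an ordering or over a random choice to derive the contradiction.

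**More concretely, I would proceed as follows.** For a vertex $v$, define $r(v)$ to be the largest $j$ such that there is a directed walk ending at $v$ that is a $j$-rainbow (uses $j$ distinct colours). Since there is no $k$-rainbow, $r(v) \le k-1$ for every $v$. For each vertex I also want to track flash structure: within each colour class, the absence of an $l$-flash means that the longest monochromatic directed walk in any single colour has length at most $l-1$. The plan is to assign each vertex a weight of the form $w(v) = \prod (\text{local factors})$ designed so that (i) the total weight $\sum_v w(v)$ is bounded above by something like $l^{k-1}$ times a small correction, and (ii) the total weight is also bounded below by $n$ times a comparable factor, forcing $n \le (1+2/\sqrt{l})^k l^{k-1}$. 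The factor $(1+2/\sqrt{l})^k$ strongly suggests that the correction per colour is roughly $(1 + 2/\sqrt{l})$ and that the argument loses a little at each of the $k$ rainbow-levels, which is where a concentration or second-moment estimate enters.

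**The probabilistic ingredient** I expect to use is a random assignment — for instance, independently giving each colour a uniformly random real label in $[0,1]$ (or a random linear order on the colours), and then within this random order counting, for each vertex, the expected length of the longest ``increasing-colour'' directed walk ending at $v$. The no-$l$-flash condition controls the contribution of each fixed colour, and the no-$k$-rainbow condition caps the number of colour-increases at $k-1$. Taking expectations and applying linearity, the multiplicative slack $(1 + 2/\sqrt{l})$ should emerge from bounding the variance or the deviation of a sum of independent increments, where the $\sqrt{l}$ reflects that each monochromatic segment has length up to $l-1$ and the typical fluctuation of a sum of $l$ random signs scales like $\sqrt{l}$.

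**The main obstacle** will be handling the fact that directed walks in a tournament can repeat vertices and edges, so the usual clean recursion available in the transitive (ordered) case breaks down; there is no canonical vertex ordering to induct along. Thus the crux is to design the weight/potential so that it is well-defined and sub-multiplicative along directed walks despite this lack of acyclic structure, and to verify that the probabilistic averaging genuinely yields the factor $(1 + 2/\sqrt{l})^k$ rather than a weaker $(1+1/\sqrt{l})$ or $2^k$-type loss. Getting the constant $2$ correct — as opposed to the $1$ in the transitive \cref{thm:fasymptotic} — will require a careful estimate of the tail or second moment in the random-ordering step, and matching it against the exponent $k$ rather than $k-4$ will reflect the extra generality lost in moving from transitive tournaments to arbitrary ones.
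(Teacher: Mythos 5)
Your proposal is a plan rather than a proof, and the plan as it stands is missing the two ideas that actually make the argument work. First, nothing in your sketch reduces to the situation where the probabilistic step has any chance of succeeding: you need every vertex to have few incoming and few outgoing \emph{colours}, and this does not hold in a general tournament. The paper gets this by induction on $k$: let $R$ (resp.\ $S$) be the set of vertices at which no $(k-1)$-rainbow ends (resp.\ starts). Then $\abs{R}, \abs{S} \le t(l,k-1)$, and every vertex outside $R$ has at most $k-1$ outgoing colours (else extend a $(k-1)$-rainbow ending there to a $k$-rainbow), and symmetrically for $S$. This yields the recursion $t(l,k) \le 2\,t(l,k-1) + (l+2\sqrt{l})^{k-1}$, and the theorem follows by summing a geometric series. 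Your quantity $r(v)$ points vaguely in this direction, but you never extract the bounded-colour-degree conclusion from it, and without that reduction no weighting scheme of the kind you describe can be bounded below uniformly over vertices.

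Second, the obstacle you correctly identify --- that walks may repeat vertices, so there is no acyclic structure to recurse along --- is resolved by a specific potential that your sketch does not contain: for each colour $a$ and vertex $v$, let $l_a(v)$ be the length of the longest flash of colour $a$ ending at $v$ (set to $l-1$ when $a$ is incoming-but-not-outgoing at $v$). Since there is no $l$-flash, $l_a(v) \in \{0,\dots,l-1\}$, and along any edge $uv$ of colour $a$ one has $l_a(u) < l_a(v)$; this strict monotonicity is what replaces the missing vertex ordering. The probabilistic step is then a pure first-moment argument, not the concentration/second-moment estimate you anticipate: independently for each colour $a$, choose a target $l_a$ with $\pr(l_a = 0) = \pr(l_a = l-1) = p$ and $\pr(l_a = i) = p^2$ for $1 \le i \le l-2$, where $p = 1/(1+\sqrt{l-1})$ solves $(l-2)p^2 + 2p = 1$. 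The set $U$ of vertices $v$ with $l_a(v) = l_a$ for all colours $a$ incident to $v$ contains at most one vertex (an edge inside $U$ would violate monotonicity), while each vertex lies in $U$ with probability at least $p^{\abs{c^-(v)}+\abs{c^+(v)}} \ge p^{2(k-1)}$, because $l_a(v) \in \{0, l-1\}$ exactly when $a$ is incoming-only or outgoing-only at $v$. Hence $\abs{V \sm (R\cup S)} \le p^{-2(k-1)} = (l+2\sqrt{l-1})^{k-1} \le \bigl(1+\tfrac{2}{\sqrt{l}}\bigr)^{k-1} l^{k-1}$. In particular the $\sqrt{l}$ comes from optimising $p$ under the constraint $(l-2)p^2+2p=1$, not from fluctuations of sums of random signs; your proposed random-linear-order-on-colours scheme with increasing-colour walks is never carried out, and there is no indication it would produce the stated factor.
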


Our second upper bound, \cref{thm:tupperbound}, is better when $k$ is large compared to $l$ and, in particular, improves upon \cref{thm:tasymptotic} for $l = \calO((k/\log k)^2)$. Note that it implies \cref{thm:fupperbound}.

\begin{theorem}\label{thm:tupperbound}
    For every $\eps > 0$, there is $D_{\eps}$ such that for all positive integers $l$ and $k$,
    \begin{equation*}
        t(l, k) \leq D_{\eps} k (\log k)^{1 + \eps} \cdot l^{k - 1}.
    \end{equation*}
\end{theorem}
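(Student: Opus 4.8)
The plan is to prove the tournament bound directly; since \cref{thm:tupperbound} implies \cref{thm:fupperbound}, no separate argument for transitive tournaments is needed. Throughout fix a tournament $T$ on vertex set $V$ together with an edge-colouring $c$ that contains neither an $l$-flash nor a $k$-rainbow, and write $n = \abs{V}$; the goal is to show $n \le D_\eps\, k (\log k)^{1+\eps} l^{k-1}$.

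First I would record the two monotonicity gadgets that drive everything. For a colour $\chi$ let $f_\chi(v)$ be the length of the longest $\chi$-monochromatic directed walk ending at $v$. Because there is no $l$-flash this is finite with $f_\chi(v) \in \set{0,1,\dotsc,l-1}$, and whenever $u \to v$ is an edge of colour $\chi$ we have $f_\chi(v) > f_\chi(u)$; thus each colour class is stratified into at most $l$ ``flash-layers''. Similarly let $\rho(v)$ be the length of the longest rainbow ending at $v$, so $\rho(v) \in \set{0,1,\dotsc,k-1}$ since there is no $k$-rainbow. The key---but only \emph{approximate}---monotonicity is that if $u \to v$ has colour $\chi$ and some deepest rainbow ending at $u$ avoids $\chi$, then $\rho(v) \ge \rho(u)+1$. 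These two facts are the tournament replacements for the linear order available in the transitive case.

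Next I would build, for each vertex, a profile that encodes both constraints. The target is an (almost) injective map $v \mapsto \Phi(v)$ into a set of size about $l^{k-1}$: reading off a canonical deepest rainbow ending at $v$, record for each of its $\rho(v) \le k-1$ edges the flash-potential $f_\chi$ of its colour at the relevant endpoint, producing a string of length $\rho(v)$ with entries in $\set{1,\dotsc,l}$. In a transitive tournament the global order lets one choose the canonical rainbow consistently and prove that $\Phi$ is injective, recovering the bound $l^{k-1}$ and hence \cref{conj:tequality} in that setting. The whole difficulty is that a tournament has no such order.

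The overhead factor is where the real work lies, and I expect it to come from a recursion on $k$ that peels off a block of colours. Concretely I would split the rainbow ``budget'' $k$ into pieces and bound $t(l,k)$ in terms of $t(l,k')$ for smaller $k'$, each split costing a factor of the shape $l^{\,k-k'}$ times a slowly-growing multiplicity term; iterating over $\calO(\log k)$ dyadic scales and multiplying the per-scale losses produces the linear factor $k$ together with an accumulated $(\log k)$-power, with the free parameter $\eps$ (and the constant $D_\eps$) arising from optimising the block sizes so that a convergent series of the form $\sum_j j^{-(1+\eps)}$ controls the total loss. The main obstacle throughout is the failure of transitivity: directed cycles mean there is no canonical ordering of vertices, the rainbow-depth $\rho$ is only almost-monotone along edges, and a single vertex is reached by rainbows through many different colour-sequences, so $\Phi$ is genuinely non-injective. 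Quantifying this overcounting---showing that the number of vertices sharing a given profile is controlled, and that the slack compounds to no more than $k(\log k)^{1+\eps}$ rather than something exponential in $k$---is the crux, and is exactly what the careful flash-layer and rainbow-depth accounting inside the recursion must deliver.
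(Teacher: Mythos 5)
There is a genuine gap: your proposal correctly identifies the flash-layer statistics $f_\chi(v)$ (the paper's $l_a(v)$) and the general shape of a recursion on $k$ that peels off colours at a cost of $l^{\text{(colours peeled)}}$, but it defers rather than solves the central difficulty. Your plan hinges on an ``almost injective'' profile map $\Phi$, you concede that $\Phi$ is genuinely non-injective in a tournament, and you offer no mechanism for bounding the fibres of $\Phi$ --- you explicitly label this ``the crux'' and leave it to unspecified ``accounting inside the recursion''. The paper's proof supplies exactly the machinery that is absent here. First, any vertex at which a $(k-1)$-rainbow ends has at most $k-1$ outgoing colours (else the rainbow extends), which is where the factor $k$ enters. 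Second, it introduces \emph{$r$-in-robust} vertices (for every colour $a$, some $r$-rainbow avoiding $a$ ends at $v$); robustness is what lets a rainbow ending at $v$ be continued through an outgoing edge $vu$ of colour $a$ without a colour clash, and the auxiliary threshold $g(l,k,r)$ controls how large a tournament must be before such a vertex appears (\cref{lem:grecursiveupperbound}). Third, ordering the vertices to maximise the number of forward edges guarantees the robust vertex $v$ beats at least half of the vertices after it, so its out-neighbourhood $U$ captures a constant fraction of what remains. Partitioning each $U_a$ by the vector $(l_b(u))_{b \in C_a} \in \set{0,\dots,l-1}^{C_a}$ gives $l^r$ classes, each of which contains no $(k-r)$-rainbow; this yields the recursion $t(l,k) \le t(l,k-1) + g(l,k,r-1) + 1 + 2(k-1)l^r \cdot t(l,k-r)$ of \cref{lem:trecursiveupperbound}. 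None of these four steps appears in your outline, and without them there is no route from ``profiles overcount'' to a quantitative bound.

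Your proposed source of the $(\log k)^{1+\eps}$ factor is also not the mechanism that actually works, and I see no evidence it can be made to work. You suggest iterating over $\calO(\log k)$ dyadic scales and controlling the compounded loss by a convergent series $\sum_j j^{-(1+\eps)}$. In the paper there is no dyadic iteration: one applies the recursion a single time with $r$ chosen so that $k - r = \Theta(\log k)$ (concretely $r = \lfloor k - (2\log k + \log 16)/\log l \rfloor$), and then invokes the induction hypothesis at the tiny budget $k-r$, giving a contribution $2kl^r\, t(l,k-r) \le 2 D_\eps k (k-r)(\log(k-r))^{1+\eps} l^{k-1}$. The induction closes precisely because $(3\log k + 5)(\log(3\log k+5))^{1+\eps} \ll (\log k)^{1+\eps}$ for large $k$ --- and this is where $\eps > 0$ is genuinely needed, since with $\eps = 0$ the nested-logarithm comparison fails (\cref{cor:tgupperbounds}). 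Finally, note that the base case $k < k_0$ of that induction is supplied by \cref{thm:tasymptotic}, which in turn rests on the probabilistic \cref{lem:cinoutcols}; your sketch contains no substitute for this base case either.
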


\Cref{conj:tequality} posits that if one wants to colour the edges of a large tournament while avoiding $l$-flashes and $k$-rainbows, then transitive tournaments are best in the sense of having the most vertices. The following result shows that there are tournaments that are strictly worse than transitive tournaments.

\begin{theorem}\label{thm:tworse}
    For all $l \ge 2$ and $k \geq 3$, there is a tournament with $f(l,k) - 1$ vertices such that every colouring of its edges contains an $l$-flash or a $k$-rainbow.
\end{theorem}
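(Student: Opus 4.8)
The plan is to start from the extremal transitive construction and introduce a single cyclic twist. For the base case $l = 2$, $k = 3$ the directed triangle already works: since $f(2, 3) = 4$ we need a tournament on three vertices, and the directed triangle $1 \to 2 \to 3 \to 1$ suffices. Indeed, avoiding a $2$-flash forces each directed path of length two to be bicoloured, so the three edges receive pairwise distinct colours, whereupon the closed walk $1, 2, 3, 1$ is a $3$-rainbow. Equivalently, the walk $3, 1, 2$ forces $c(31) \neq c(12)$ while avoiding a $3$-rainbow on $1, 2, 3, 1$ forces $c(12) = c(31)$, a contradiction. Note that this triangle is exactly the transitive tournament on $\{1, 2, 3\}$ with the edge between its bottom and top vertices reversed.

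For general $l$ and $k$ I would take a transitive tournament $R$ on $f(l, k)$ vertices with a colouring avoiding an $l$-flash and a $k$-rainbow (which exists by the definition of $f(l, k)$), delete its top vertex to leave vertices $1 < \dots < n$ with $n = f(l, k) - 1$, and reverse the edge between $1$ and $n$, so that $n \to 1$. The resulting tournament $T$ is transitive except for this one arc, and every directed cycle passes through it; for $(l, k) = (2, 3)$ it is precisely the directed triangle above. Since $n < f(l, k)$, the underlying transitive tournament on $\{1, \dots, n\}$ is colourable, so the reversed arc must do all the work: the point is that the directed walks wrapping around the unique back-arc are rich enough to defeat every colouring.

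To prove that $T$ has no good colouring, suppose $c$ avoids both an $l$-flash and a $k$-rainbow. I would track, for each vertex $v$, the length of the longest monochromatic walk ending at $v$ (at most $l - 1$) and the length of the longest rainbow walk ending at $v$ (at most $k - 1$), and analyse how these potentials evolve along walks that climb through forward edges and then return via the arc $n \to 1$. Concatenating such laps produces arbitrarily long directed walks, and the goal is to show that the cyclic consistency forced by returning to vertex $1$ is incompatible with the two upper bounds unless a flash or rainbow has already appeared, exactly as the two clashing constraints collide in the triangle.

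The main obstacle is that, for $l \geq 3$, avoiding an $l$-flash no longer forces the colour to change at every forward step, so the rainbow potential need not strictly increase along a climb and the adversary has room to recycle colours. Overcoming this should require using that $T$ lies only one vertex below the transitive threshold: the forward part is so close to extremal that its good colourings are essentially rigid, and the single reversed arc cannot be consistently fitted to any of them. Making this rigidity precise, and thereby quantifying how much forcing power the wrapped walks add and showing that it beats $f(l, k)$ by exactly the required single vertex, is the crux, with the directed triangle serving as the prototype in which parity alone forces three distinct colours and hence a rainbow.
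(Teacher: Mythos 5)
Your construction is exactly the paper's (the transitive tournament on $f(l,k)-1$ vertices with the edge between its bottom and top vertices reversed), and your verification of the case $(l,k)=(2,3)$ is correct. But the heart of the proof --- showing that \emph{every} colouring of this tournament contains an $l$-flash or a $k$-rainbow for general $l\ge 2$, $k\ge 3$ --- is missing, and you say so yourself: the ``rigidity'' of near-extremal colourings that you hope to exploit is never made precise, and it is far from clear it could be, since good colourings of transitive tournaments on $f(l,k)-1$ vertices are not unique or rigid in any obvious sense (indeed, understanding them directly is essentially as hard as determining $f(l,k)$, which is open in general). Likewise, the potential-tracking argument along ``wrapped'' walks is exactly what breaks for $l\ge 3$, as you note. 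So what you have is the right construction, a correct base case, and a statement of the difficulty, not a proof.

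The paper's key idea, which your proposal lacks, goes in the opposite direction: rather than analysing colourings of $T$ directly, it assumes $T$ has a good colouring $c$ and uses it to build a good colouring of a \emph{transitive} tournament $T'$ on $f(l,k)+1$ vertices, contradicting the definition of $f(l,k)$. Concretely, with $n=f(l,k)$ and $T$ on $[n-1]$ with the arc $(n-1,1)$, one adds a universal source $0$ and universal sink $n$; each edge $(0,v)$ gets the colour of some incoming edge of $v$ in $T$, each edge $(v,n)$ the colour of some outgoing edge of $v$ in $T$ (this is precisely where the reversed arc matters: it guarantees every vertex of $T$, including $1$ and $n-1$, has both an in-edge and an out-edge), and the edges $(1,n-1)$ and $(0,n)$ receive two brand-new colours. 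Any walk in $T'$ avoiding those two special edges can be simulated by a walk in $T$ with the same colour sequence, the edge $(0,n)$ lies in no walk of length greater than $1$, and the only length-$3$ walk through $(1,n-1)$ is $0,1,n-1,n$, whose first and last edges share the colour $c(n-1,1)$ (this is where $k\ge 3$ enters). Hence $T'$ has no $l$-flash and no $k$-rainbow, the desired contradiction. This black-box reduction to the definition of $f(l,k)$ completely bypasses the structural analysis your sketch requires, and without something like it your approach stalls at exactly the point you identified as the crux.
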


Finally, it is interesting to ask how bad a tournament can be: how few vertices can a tournament have if every colouring of its edges contains an $l$-flash or a $k$-rainbow? The following provides a general lower bound showing that the number of vertices in such a tournament must be within a factor $l \sqrt{k}$ of $l^{k - 1}$.

\begin{theorem}\label{thm:tlowerbound}
    There is a constant $C > 0$ such that every tournament with at most $C l^{k - 2}/\sqrt{k}$ vertices has an edge colouring with neither $l$-flashes nor $k$-rainbows.
\end{theorem}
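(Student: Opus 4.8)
The plan is to exhibit, for any tournament $T$ on $n \le C l^{k-2}/\sqrt{k}$ vertices, a colouring using only $k-1$ colours in which every colour class, viewed as a subdigraph, contains no directed walk of length $l$. Such a colouring automatically avoids $k$-rainbows, since a $k$-rainbow needs $k$ distinct colours, and avoids $l$-flashes, since an $l$-flash is a monochromatic directed walk of length $l$. Thus the whole problem reduces to partitioning the arcs of $T$ into $k-1$ \emph{short} acyclic pieces.

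To build such a colouring I would embed the vertices into the grid poset. Let $P = \{1,\dotsc,l\}^{k-1}$ be ordered by the product (coordinatewise) order, and suppose we have an injection $\Phi \colon V(T) \to A$ into an antichain $A \subseteq P$; write $\Phi(x) = (a_1(x),\dotsc,a_{k-1}(x))$. Because any two images are incomparable, for each arc $u \to v$ of $T$ we have $\Phi(u) \not\ge \Phi(v)$, so there is at least one coordinate $i$ with $a_i(u) < a_i(v)$; colour the arc $u\to v$ with the least such $i$. Then along any directed walk in colour $i$ the value $a_i$ strictly increases, and as $a_i$ takes values in $\{1,\dotsc,l\}$ such a walk has length at most $l-1$. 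Hence no colour class contains an $l$-flash, only $k-1$ colours are used, and the colouring is as required.

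It remains to produce an antichain in $P$ of size at least $n$. Partition $P$ into its layers $A_s = \{x \in P : \sum_i x_i = s\}$; each $A_s$ is an antichain, since comparable points with equal coordinate sum must coincide. I would lower bound the largest layer by a second-moment argument: for a uniformly random $x \in P$ the sum $\sum_i x_i$ has mean $\mu = (k-1)(l+1)/2$ and variance $\sigma^2 = (k-1)(l^2-1)/12$, so by Chebyshev at least half of the $l^{k-1}$ points of $P$ lie in the $O(\sigma)$ layers with $\lvert s-\mu\rvert \le \sqrt{2}\,\sigma$. One of these layers therefore contains at least $\tfrac{1}{2} l^{k-1} / (2\sqrt{2}\,\sigma + 1) = \Omega(l^{k-1}/\sigma) = \Omega(l^{k-2}/\sqrt{k})$ points. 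Choosing $C$ below the implied constant guarantees an antichain of size at least $n$, completing the embedding and hence the proof. (One could instead quote the de Bruijn--Tengbergen--Kruyswijk theorem that the middle layer is a maximum antichain, but only the crude averaging bound is needed.)

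The only real work is the layer-size estimate: the averaging argument shows that the largest layer exceeds the average layer size $\approx l^{k-2}/(k-1)$ by the crucial factor $\sqrt{k}$, which is exactly what produces the $\sqrt{k}$ in the statement. Everything else---the reduction to $k-1$ colours and the verification that monotone colour classes are flash-free---is routine, so I expect the concentration/antichain bound to be the main (and essentially only) obstacle.
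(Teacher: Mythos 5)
Your proposal is correct and takes essentially the same approach as the paper: both embed the vertices injectively into an antichain of the grid poset $\{1,\dotsc,l\}^{k-1}$ and colour each arc by a coordinate that strictly increases along it, so that only $k-1$ colours appear (hence no $k$-rainbow) and every monochromatic walk has length less than $l$ (hence no $l$-flash). The only difference is that the paper obtains the $\Omega(l^{k-2}/\sqrt{k})$ antichain bound by citing Anderson's theorem on the middle layer, whereas you derive a bound of the same order for the largest layer by an elementary Chebyshev argument, which is a perfectly adequate self-contained substitute.
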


The rest of the paper is structured as follows. In \cref{sec:boundstournaments} we consider arbitrary tournaments and prove \cref{thm:tasymptotic,thm:tupperbound}. We then specialise to transitive tournaments in \cref{sec:boundstransitive} and provide a proof of \cref{thm:fequality}. In \cref{sec:colouringtournaments} we explore the differences between these two settings, proving \cref{thm:tworse,thm:tlowerbound}, and we close by stating some open problems in \cref{sec:openproblems}. Throughout this paper, $l$ and $k$ are positive integers.

\section{Upper bounds for arbitrary tournaments}
\label{sec:boundstournaments}

In this section, we prove \cref{thm:tasymptotic,thm:tupperbound}. For both, we rely on the following observation: if a tournament contains no $k$-rainbow and a $(k-1)$-rainbow ends at some vertex $v$, then $v$ has at most $k-1$ outgoing colours. Indeed, otherwise any $(k-1)$-rainbow ending at $v$ could be extended to a $k$-rainbow. This will allow us to restrict ourselves to tournaments where each vertex only has few outgoing (and incoming) colours. In this setting, we show that the following lemma holds.
  
\begin{lemma}\label{lem:cinoutcols}
    Let $T = (V, E)$ be an edge-coloured tournament containing no $l$-flash, and assume that every vertex has at most $c$ incoming colours and at most $c$ outgoing colours. Then,
    \[
        \abs{V} \le \l(1 + \frac{2}{\sqrt{l}}\r)^c \cdot l^c.
    \]
\end{lemma}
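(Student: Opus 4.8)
The plan is to encode each vertex by a vector of monochromatic path-lengths and to argue that this encoding is injective, the whole difficulty being to carry out the encoding using only a bounded number of coordinates. First I would record the structural consequence of having no $l$-flash: every colour class is acyclic, since a monochromatic directed cycle would give monochromatic walks of every length (walks may repeat edges), and moreover no colour class contains a directed path with $l$ edges. Hence for a colour $i$ and a vertex $v$ I can define the \emph{out-height} $d_i(v)$, the number of edges in a longest colour-$i$ directed path starting at $v$; then $d_i(v) \in \{0, 1, \dots, l-1\}$, with $d_i(v) \ge 1$ exactly when $i$ is an outgoing colour of $v$. The key monotonicity is that an edge $u \to v$ of colour $i$ can be prepended to a longest colour-$i$ path out of $v$, so $d_i(u) \ge d_i(v) + 1$.

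This already yields an injection. Setting $\phi(v) = (d_i(v))_i$, any two vertices $u, v$ are joined by an edge, say $u \to v$ of colour $i$, and then $d_i(u) > d_i(v)$, so $\phi(u) \neq \phi(v)$. Each $\phi(v)$ has support equal to the set of outgoing colours of $v$, of size at most $c$, with entries in $\{1, \dots, l-1\}$. The obstacle is that $\phi$ lives in a product indexed by \emph{all} colours, of which there may be arbitrarily many, whereas the target bound must not depend on that count. Crucially, the outgoing bound alone is genuinely insufficient: a transitive tournament $v_1 \to \dots \to v_N$ in which the colour of $v_a v_b$ depends only on $a$ gives every vertex a single outgoing colour yet is unbounded and flash-free. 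So the incoming bound must be used. I would bring it in symmetrically through the \emph{in-heights} $e_i(v)$ (longest colour-$i$ path ending at $v$), which obey the dual monotonicity and the envelope inequality
\[
    d_i(v) + e_i(v) \le l - 1,
\]
obtained by concatenating a longest incoming and a longest outgoing monochromatic path at $v$.

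The heart of the argument is then to compress the colour-indexed vector $\phi(v)$ into one with exactly $c$ coordinates at a cost of only $(1 + 2/\sqrt{l})^c$. I would do this probabilistically: assign the colours independently to $c$ "tracks" by a uniformly random map $\rho$, and in each track record the out-height of the outgoing colour of $v$ landing there. For a fixed edge $u \to v$ of colour $i$ this code separates $u$ from $v$ unless some \emph{other} outgoing colour of $v$ collides with $i$ on track $\rho(i)$ and carries the same height. Here both hypotheses pull their weight: the incoming bound limits how many colours meet at a vertex (so the birthday-type collision probability is of order $\binom{c}{2}$ divided by the number of tracks), while splitting each height into a coarse and a fine part at scale $\sqrt{l}$, using the envelope inequality to keep the pair within a triangle, lets a single alphabet of size about $l + 2\sqrt{l}$ per track absorb the collision resolution. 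An expectation or union-bound computation should then produce a single $\rho$ for which the code is injective on $V$, giving $\abs{V} \le (1 + 2/\sqrt{l})^c \, l^c$.

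The main obstacle, then, is not injectivity, which is immediate from the height functions, but eliminating the dependence on the number of colours \emph{at the sharp rate}. I expect the delicate point to be tuning the compression so that the overhead is the multiplicative $(1 + 2/\sqrt{l})^c$ rather than a crude $\exp(\Theta(c))$ birthday loss; this is exactly where the envelope constraint on $(d_i, e_i)$ and the balance between collision probability and the $\sqrt{l}$ alphabet slack have to be exploited simultaneously, and getting the constant $2$ (rather than some larger absolute constant) is likely to require the careful optimisation of the number of tracks against the alphabet size.
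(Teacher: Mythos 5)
Your preliminaries are correct and essentially coincide with the paper's setup: your out-heights $d_i(v)$ are (up to reversing orientation) exactly the paper's functions $l_a(v)$, the monotonicity along an edge is the same key fact, and your envelope inequality $d_i(v)+e_i(v)\le l-1$ is the paper's observation that a colour which is both incoming and outgoing at $v$ must have its height in $\{1,\dots,l-2\}$. The gap is in the compression step, and it is fatal rather than a matter of tuning constants. You want a single random assignment $\rho$ of colours to $c$ tracks for which the induced $c$-coordinate code is injective on all of $V$, certified by an expectation or union-bound computation. This cannot work at the claimed rate. First, the per-pair failure probability is not small: for an edge $u \to v$ of colour $i$, if $v$ has any outgoing colour $j$ with $d_j(v)=d_i(u)$, then the codes of $u$ and $v$ agree in track $\rho(i)$ with probability of order $1/c$, so a union bound over $\binom{\abs{V}}{2}$ pairs would require $\abs{V}^2 \lesssim c$ (indeed, your own estimate ``$\binom{c}{2}$ divided by the number of tracks'' is about $c/2$, which already exceeds $1$). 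Second, and more fundamentally, no first-moment argument can certify injectivity of a map into an alphabet of size $A^c$ (here $A \approx l+2\sqrt{l}$) once $\abs{V}$ exceeds roughly $A^{c/2}$: by the birthday phenomenon the expected number of colliding pairs is of order $\binom{\abs{V}}{2}/A^c \gg 1$, so even the deletion method tops out at a bound of about $A^{c/2}$, the square root of what you need. Since the lemma must hold for $\abs{V}$ right up to $(1+2/\sqrt{l})^c\, l^c$, and injectivity becomes pigeonhole-impossible just above that threshold, any proof that asks a compressed code to be injective on all of $V$ with positive probability breaks down in exactly the regime that matters.

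The paper's counting mechanism avoids compression and injectivity altogether. For every colour $a$ it independently guesses a value $l_a \in \{0,\dots,l-1\}$, giving the two extreme values $0$ and $l-1$ probability $p = 1/(1+\sqrt{l-1})$ each and every middle value probability $p^2$ (so that $(l-2)p^2+2p=1$), and it overrides the height of each one-sided colour at $v$ (incoming-but-not-outgoing, in its orientation) to be the extreme value $l-1$; this override preserves the strict monotonicity along edges. Letting $U$ be the set of vertices whose height profile matches the guess on all incident colours, one has $\abs{U}\le 1$ deterministically, because the endpoints of any edge disagree on that edge's colour, which is incident to both. Meanwhile each vertex lies in $U$ with probability exactly $p^{\abs{c^-(v)}+\abs{c^+(v)}} \ge p^{2c}$: one-sided colours sit at extreme values (cost $p$) and two-sided colours sit at middle values (cost $p^2$), which is precisely where your envelope inequality does its real work. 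Taking expectations gives $\abs{V} \le p^{-2c} = (l+2\sqrt{l-1})^c \le (1+2/\sqrt{l})^c \cdot l^c$. The point is that the counting proceeds by disjointness of the events ``$v$ matches the guess'' --- a weighted pigeonhole over the full, uncompressed guess space --- rather than by injectivity of a finite code; that is what lets an unbounded number of colours and the sharp constant coexist. If you replace your tracks-and-collisions step with this guess-and-match argument, your height functions and envelope inequality slot in directly and the proof goes through.
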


We prove this result with a probabilistic argument. We will use the following definitions. For a vertex $v$, we write $c^-(v) = \{ c(u v) \st u v \in E \}$ for the set of incoming colours of $v$, $c^+(v) = \{ c(v u) \st v u \in E \}$ for the set of outgoing colours of $v$, and $c(v) = c^-(v) \cup c^+(v)$ for the set of colours incident to $v$. Moreover, for every vertex $v$ and colour $a$, let
\[
    l_a(v) = \begin{cases}
        l-1 & \text{if } a \in c^-(v) \sm c^+(v), \\
        \text{length of the longest flash of colour $a$ ending at $v$} & \text{otherwise}.
    \end{cases}
\]
This is defined for every colour $a$ in our colouring; however $l_a(v)$ is only non-zero for incoming colours of $v$. Note also that if an edge $u v$ has colour $a$, then $l_a(u) < l_a(v)$.

\begin{proof}[Proof of \cref{lem:cinoutcols}]
    Let $p \ge 0$ be the root of $(l-2)p^2+2p=1$, so $p = 1 / (1 + \sqrt{l-1})$. For every colour $a$, choose $l_a \in \{0, \dots, l-1\}$ independently at random such that $\pr(l_a = 0) = \pr(l_a = l-1) = p$ and $\pr(l_a = i) = p^2$ for every $i \in [l-2]$. Define
    \[
        U = \{ v \in V \st \forall a \in c(v),\ l_a(v) = l_a \}.
    \]
    If $T[U]$ contains an edge $u v$, say of colour $a$, then $a \in c(u) \cap c(v)$ and $l_a(u) < l_a(v)$, contradicting the definition of $U$. Thus, $\abs{U} \le 1$. Moreover, for every vertex $v$ and every colour $a \in c(v)$ it holds that $l_a(v) \in \{0, l-1\}$ if and only if $a \in c^-(v) \sdiff c^+(v)$, and so
    \[
        \pr(v \in U) = p^{\abs{c^-(v) \sdiff c^+(v)}} p^{2 \abs{c^-(v) \cap c^+(v)}} = p^{\abs{c^-(v)} + \abs{c^+(v)}} \ge p^{2 c}.
    \]
    It follows that the expected size of $U$ is at least $p^{2 c} \abs{V}$, and since $\abs{U} \le 1$ we have
    \begin{align*}
        \abs{V} & \le p^{-2 c} = \l(1 + \sqrt{l-1}\r)^{2 c} = \l(l + 2 \sqrt{l-1}\r)^c \le \l(1 + \frac{2}{\sqrt{l}}\r)^c \cdot l^c. \qedhere
    \end{align*}
\end{proof}

To obtain \cref{thm:tasymptotic}, we will use induction to bound the number of vertices where no $(k-1)$-rainbow starts and no $(k-1)$-rainbow ends. By the observation from the beginning of this section, the number of remaining vertices can then be bounded via the preceding lemma.

\begin{proof}[Proof of \cref{thm:tasymptotic}]
    For $k \ge 2$, we claim that $t(l,k) \le 2 \cdot t(l,k-1) + (l + 2 \sqrt{l})^{k-1}$. Indeed, let $T = (V, E)$ be a tournament containing no $l$-flash and no $k$-rainbow. Define
    \begin{align*}
        & R = \{ v \in V \st \text{no $(k-1)$-rainbow ends at $v$} \} \text{ and} \\
        & S = \{ v \in V \st \text{no $(k-1)$-rainbow starts at $v$} \}.
    \end{align*}
    Clearly $\abs{R}, \abs{S} \le t(l,k-1)$. Every vertex $v \in V \sm R$ has at most $k-1$ outgoing colours (or else any $(k-1)$-rainbow ending at $v$ could be extended to a $k$-rainbow), and similarly every vertex in $V \sm S$ has at most $k-1$ incoming colours. By \cref{lem:cinoutcols}, it follows that $\abs{V \sm (R \cup S)} \le (1 + 2 / \sqrt{l})^{k-1} \cdot l^{k-1}$ and therefore
    \[
        \abs{V} \le \abs{R} + \abs{S} + \abs{V \sm (R \cup S)} \le 2 \cdot t(l,k-1) + (l + 2 \sqrt{l})^{k-1}.
    \]
    This proves the claim. A straightforward induction on $k$ with base case $t(l,1) = 1$ yields
    \begin{align*}
        t(l,k) & \le \sum_{i=0}^{k-1} \l(\frac{2}{l + 2 \sqrt{l}}\r)^i (l + 2 \sqrt{l})^{k-1} \le \frac{1}{1 - \frac{2}{l + 2 \sqrt{l}}} (l + 2 \sqrt{l})^{k-1} \\
        & = \frac{l + 2 \sqrt{l}}{l + 2 \sqrt{l} - 2} \l(1 + \frac{2}{\sqrt{l}}\r)^{k-1} \cdot l^{k-1} \le \l(1 + \frac{2}{\sqrt{l}}\r)^k \cdot l^{k-1}. \qedhere
    \end{align*}
\end{proof}

Next, we prove \cref{thm:tupperbound}. To this end, we will consider so-called `robust' vertices, which were introduced by Lefmann, R\"{o}dl, and Thomas \cite{LRT1992} under a different name. Loosely, we call a vertex $v$ \emph{robust} if we can find long rainbows ending and/or starting at $v$ that avoid any given colour.

\begin{definition}
    Let $r$ be a non-negative integer. A vertex $v$ is \defn{$r$-in-robust} if, for every colour $a$, some $r$-rainbow without colour $a$ ends at $v$.
\end{definition}

In-robust vertices are very useful for constructing long rainbows. Indeed, assume that $u$ is an in-robust vertex with an edge of colour $a$ to some other vertex $v$. Then, we can pick a long rainbow without colour $a$ ending at $u$ and try to connect this rainbow to some rainbow starting at $v$. Since the edge $u v$ cannot interfere with the rainbow ending at $u$, choosing an appropriate rainbow starting at $v$ might result in a long rainbow.

Of course, in-robust vertices are only useful if they are forced to occur in large enough tournaments. As above, it will suffice to consider tournaments with few incoming and outgoing colours at each vertex. Therefore, let $g(l,k,r)$ denote the smallest integer such that if $T$ is a tournament with more than $g(l,k,r)$ vertices, then every edge colouring of $T$ with at most $k-1$ outgoing colours at each vertex contains an $l$-flash, a $k$-rainbow, or an $r$-in-robust vertex.

It turns out that $g(l,k,r)$ is not much larger than $t(l,r)$. Therefore, in large tournaments we will be able to find $r$-in-robust vertices for large $r$, which we can then use to find $k$-rainbows as outlined above. This strategy yields the following bound on $t(l,k)$.

\begin{lemma}\label{lem:trecursiveupperbound}
    For every $k \ge 2$ and $1 \le r < k$,
    \[
        t(l,k) \le t(l,k-1) + g(l,k,r-1) + 1 + 2 (k-1) l^r \cdot t(l,k-r).
    \]
\end{lemma}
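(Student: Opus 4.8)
The plan is to bound $t(l,k)$ by analysing a tournament $T = (V,E)$ that is extremal, meaning it has no $l$-flash and no $k$-rainbow, and showing its vertex set decomposes into a few pieces each controlled by the quantities on the right-hand side. First I would introduce the set $R = \{v \in V \st \text{no } (k-1)\text{-rainbow ends at } v\}$; as in the proof of \cref{thm:tasymptotic}, $\abs{R} \le t(l,k-1)$, since $T[R]$ contains neither an $l$-flash nor a $(k-1)$-rainbow. Every remaining vertex has a $(k-1)$-rainbow ending at it, and since $T$ has no $k$-rainbow, the observation from the start of the section shows each such vertex has at most $k-1$ outgoing colours. This is exactly the hypothesis needed to invoke the function $g(l,k,r-1)$: restricting attention to $V \sm R$, if this set had more than $g(l,k,r-1)$ vertices it would contain an $l$-flash, a $k$-rainbow, or an $(r-1)$-in-robust vertex. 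The first two are excluded, so I may assume the existence of an $(r-1)$-in-robust vertex $v_0$, at the cost of the additive term $g(l,k,r-1)$ (the $+1$ accounting for $v_0$ itself).

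The heart of the argument is to use this single in-robust vertex $v_0$ to control all the vertices lying `beyond' it, and here I expect the main difficulty to lie. The idea is that from $v_0$ one can launch an $(r-1)$-rainbow avoiding any prescribed colour, so any directed walk of length $r$ emanating from $v_0$ that stays rainbow-distinct from such a launched rainbow would complete an $r$-rainbow; combined with the in-robustness this lets one build rainbows of length roughly $r$ starting in the out-neighbourhood of $v_0$. The factor $2(k-1)l^r \cdot t(l,k-r)$ suggests the following bookkeeping: I would partition the relevant vertices according to the colour $a$ of the connecting edge (giving the factor $k-1$, the number of outgoing colours), a factor $2$ for whether we extend forwards or backwards, and $l^r$ counting the possible `types' of length-$r$ flash-profiles or rainbow-tails at a vertex. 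Each resulting class, once the rainbow-building capacity of $v_0$ is spent, should be a sub-tournament avoiding both $l$-flashes and $(k-r)$-rainbows, hence of size at most $t(l,k-r)$. The delicate point is arguing that within each class an $r$-rainbow through $v_0$ together with a $(k-r)$-rainbow inside the class would splice into a full $k$-rainbow, so that no such class can itself contain a $(k-r)$-rainbow.

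Concretely, I would fix the in-robust vertex $v_0$ and consider for each vertex $w$ reachable from $v_0$ the quantity $l_a(w)$ (the longest $a$-coloured flash ending at $w$) across the at most $r$ colours involved, grouping vertices by this $r$-tuple of flash-lengths in $\{0,\dotsc,l-1\}$, which gives the $l^r$ factor. Two vertices sharing a group and joined by an edge of one of these colours would force a longer flash, limiting interactions; the in-robustness of $v_0$ then supplies the colour-avoiding rainbow needed to extend. I would organise the direction (in/out relative to $v_0$) to get the factor $2$, and the colour of the edge from $v_0$ to get the factor $k-1$. Summing $\abs{R}$, the $g(l,k,r-1)+1$ vertices used to locate $v_0$, and the $2(k-1)l^r$ groups each of size at most $t(l,k-r)$, yields the claimed recursion. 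The step I am least sure of is verifying rigorously that each group genuinely avoids $(k-r)$-rainbows — that is, confirming the splicing of a length-$r$ rainbow built using $v_0$ with an internal $(k-r)$-rainbow always produces $r + (k-r) = k$ distinct colours along a single directed walk; ensuring the colour sets are disjoint is precisely where the colour-avoidance guaranteed by $(r-1)$-in-robustness must be used carefully.
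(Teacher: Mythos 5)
Your overall skeleton matches the paper's: the set $R$ of vertices at which no $(k-1)$-rainbow ends, with $\abs{R} \le t(l,k-1)$ and at most $k-1$ outgoing colours elsewhere; the use of $g(l,k,r-1)$ to locate an $(r-1)$-in-robust vertex; the partition of that vertex's out-neighbourhood into classes $U_a$ by the colour $a$ of the connecting edge (factor $k-1$); the further grouping of $U_a$ by the tuple $(l_b(u))_{b \in C_a} \in \{0,\dots,l-1\}^{C_a}$, where $C_a$ consists of $a$ together with the $r-1$ colours of a rainbow $W_a$ avoiding $a$ that ends at the robust vertex (factor $l^r$); and the splicing argument, which is exactly as you suspected: within a group, no edge can have a colour of $C_a$ (it would increase the corresponding flash-length), so any internal $(k-r)$-rainbow avoids $C_a$ entirely and prepending the $r$-rainbow ending at its start vertex gives $k$ distinct colours. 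The "delicate point" you flagged is therefore fine.

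The genuine gap is your accounting for the factor $2$. You propose to handle in-neighbours of $v_0$ symmetrically ("whether we extend forwards or backwards", "in/out relative to $v_0$"), but this cannot work: $(r-1)$-in-robustness only supplies colour-avoiding rainbows \emph{ending} at $v_0$. For a vertex $w$ with an edge $w \to v_0$ of colour $a$, running the argument in reverse would require a colour-avoiding rainbow \emph{starting} at $v_0$, i.e.\ out-robustness, which nothing provides (and $g$ is defined only via in-robust vertices and outgoing-colour bounds). The paper's factor $2$ comes from a different, and essential, idea that your proposal omits: order $V \sm R$ so as to maximise the number of forward edges of $T[V \sm R]$, and let $P$ be a \emph{minimal initial segment} of this ordering whose induced subtournament contains an $(r-1)$-in-robust vertex $v$; then $\abs{P} \le g(l,k,r-1)+1$, and by maximality of the ordering $v$ has out-edges to at least half of the vertices following it (otherwise moving $v$ to the end of the ordering would gain forward edges). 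Hence $\abs{V \sm (R \cup P)} \le 2\abs{U}$ where $U$ is the out-neighbourhood of $v$, and the in-neighbours never need to be controlled at all. Without this ordering trick, the in-robust vertex you find could have nearly all of $V \sm R$ as in-neighbours, and your decomposition bounds almost none of the tournament.
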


\begin{proof}
    Let $T = (V, E)$ be an edge-coloured tournament with no $l$-flash or $k$-rainbow. Let
    \[
        R = \{ v \in V \st \text{no $(k-1)$-rainbow ends at $v$} \}.
    \]
    Then $\abs{R} \le t(l,k-1)$, and every vertex in $V \sm R$ has at most $k-1$ outgoing colours.

    Order the vertices in $V \sm R$ in a way that maximises the number of forward edges of $T[V \sm R]$, and pick a minimal initial segment $P \subs V \sm R$ of this ordering such that $T[P]$ contains an $(r-1)$-in-robust vertex $v$ (if no such vertex exists, then we already have $\abs{V} \le \abs{R} + \abs{V \sm R} \le t(l,k-1) + g(l,k,r-1)$). Note that $\abs{P} \le g(l,k,r-1) + 1$. By our choice of ordering on $V\sm R$, $v$ must have outgoing edges to at least half of the vertices following it, so if $U = \{ u \in V \st v u \in E \}$ denotes the out-neighbourhood of $v$, then $\abs{V \sm (R \cup P)} \le 2 \abs{U}$.

    For every colour $a \in c^+(v)$, let $U_a = \{ u \in U \st c(v u) = a \}$. Since $v$ is $(r-1)$-in-robust, we can choose an $(r-1)$-rainbow $W_a$ without colour $a$ that ends at $v$, and extend it to each of the vertices in $U_a$ by the edges of colour $a$. Let $C_a$ be the set containing $a$ and the $r-1$ colours appearing in $W_a$.

    Next, for every $m \in \{0, \dots, l-1\}^{C_a}$ define
    \[
        U_a^m = \{ u \in U_a \st \forall b \in C_a,\ l_b(u) = m_b \}.
    \]
    If the colour of an edge $u w$ in $T[U_a^m]$ is $b$, then $l_b(u) < l_b(w)$, and so $b \notin C_a$. Thus, if $T[U_a^m]$ contained a $(k-r)$-rainbow starting at some vertex $u$, then this rainbow would use no colour from $C_a$ and could therefore be extended by the $r$-rainbow ending at $u$ to a $k$-rainbow. It follows that $T[U_a^m]$ contains no $(k-r)$-rainbow, so $\abs{U_a^m} \le t(l,k-r)$ and
    \[
        \abs{U_a} = \sum_{m \in \{0, \dots, l-1\}^{C_a}} \abs{U_a^m} \le \sum_{m \in \{0, \dots, l-1\}^{C_a}} t(l,k-r) \le l^r \cdot t(l,k-r).
    \]
    Since $v$ has at most $k-1$ outgoing colours, this implies that
    \[
        \abs{U} = \sum_{a \in c^+(v)} \abs{U_a} \le \sum_{a \in c^+(v)} l^r \cdot t(l,k-r) \le (k-1) l^r \cdot t(l,k-r).
    \]
    Putting everything together, we get
    \begin{align*}
        \abs{V} & = \abs{R} + \abs{P} + \abs{V \sm (R \cup P)} \\
        & \le \abs{R} + \abs{P} + 2 \abs{U} \\
        & \le t(l,k-1) + g(l,k,r-1) + 1 + 2 (k-1) l^r \cdot t(l,k-r). \qedhere
    \end{align*}
\end{proof}

To apply this lemma, we need a good bound on $g(l,k,r)$. Fortunately, we can use a strategy similar to that used the preceding proof to obtain the following result.

\begin{lemma}\label{lem:grecursiveupperbound}
    For every $r \ge 1$, it holds that
    \[
        g(l,k,r) \le g(l,k,r-1) + 1 + 2 (k-1) r l \cdot t(l,r).
    \]
\end{lemma}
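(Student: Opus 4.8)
The plan is to mirror the proof of \cref{lem:trecursiveupperbound}, with the simplification that the definition of $g(l,k,r)$ already restricts us to colourings with at most $k-1$ outgoing colours at each vertex, so there is no need to discard a set $R$ of vertices at which no $(k-1)$-rainbow ends. Accordingly, let $T = (V,E)$ be a colouring with at most $k-1$ outgoing colours at each vertex and with no $l$-flash, no $k$-rainbow, and no $r$-in-robust vertex; the goal is to show that $\abs{V} \le g(l,k,r-1) + 1 + 2(k-1) r l \cdot t(l,r)$. I would order $V$ so as to maximise the number of forward edges and take $P$ to be the minimal initial segment for which $T[P]$ contains an $(r-1)$-in-robust vertex $v$; if no such $v$ exists then $T$ itself witnesses $\abs{V} \le g(l,k,r-1)$ and we are done. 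Minimality of $P$ gives $\abs{P} \le g(l,k,r-1)+1$, and the maximising property of the order forces $v$ to beat at least half of the vertices lying after it, so that with $U = \{ u \in V \st v u \in E \}$ we obtain $\abs{V \sm P} \le 2 \abs{U}$ and hence $\abs{V} \le g(l,k,r-1)+1+2\abs{U}$. It then remains only to prove that $\abs{U} \le (k-1) r l \cdot t(l,r)$.

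Since $v$ has at most $k-1$ outgoing colours, it suffices to show $\abs{U_a} \le r l \cdot t(l,r)$ for each $a \in c^+(v)$, where $U_a = \{ u \in U \st c(v u) = a \}$. Using that $v$ is $(r-1)$-in-robust, I would fix one $(r-1)$-rainbow $W_a$ avoiding colour $a$ that ends at $v$, and set $C_a$ to be the union of $\{a\}$ with the colour set of $W_a$, so that $\abs{C_a} = r$. Appending the edge $v u$ to $W_a$ produces, for every $u \in U_a$, an $r$-rainbow ending at $u$ that uses only colours in $C_a$. The crucial step is the following. Because $T$ has no $r$-in-robust vertex, each $u \in U_a$ has some colour $b_u$ for which no $r$-rainbow avoiding $b_u$ ends at $u$; but the $r$-rainbow $W_a$ followed by $v u$ avoids every colour outside $C_a$, so necessarily $b_u \in C_a$. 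This confines the ``bad colour'' $b_u$ of each vertex of $U_a$ to the $r$-element set $C_a$.

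I would then partition $U_a$ according to the pair $(b_u, l_{b_u}(u)) \in C_a \times \{0, \dots, l-1\}$, which yields at most $r l$ classes. Fix such a class $Z$, whose vertices share a bad colour $b$ and a common value of $l_b$. Any edge of $T[Z]$ of colour $b$ would force its two endpoints to have distinct values of $l_b$, so every edge inside $Z$ avoids colour $b$; consequently an $r$-rainbow in $T[Z]$ would be an $r$-rainbow avoiding $b$ ending at a vertex of $Z$ whose bad colour is $b$, which is impossible. Hence $T[Z]$ contains no $r$-rainbow, and as it also contains no $l$-flash we conclude $\abs{Z} \le t(l,r)$. Summing over the at most $r l$ classes gives $\abs{U_a} \le r l \cdot t(l,r)$, and summing over the at most $k-1$ outgoing colours of $v$ gives the required bound on $\abs{U}$.

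The two routine ingredients, namely the median-order inequality $\abs{V \sm P} \le 2 \abs{U}$ and the bound $\abs{P} \le g(l,k,r-1)+1$, carry over essentially verbatim from the proof of \cref{lem:trecursiveupperbound}. I expect the main obstacle to be the middle step: correctly using the absence of $r$-in-robust vertices to pin each vertex's bad colour inside the small set $C_a$. This robustness argument is precisely what replaces the ``extend to a $k$-rainbow'' step of \cref{lem:trecursiveupperbound}, and it is what lets the number of relevant classes drop from $l^r$ to $r l$, at the expense of bounding each class by $t(l,r)$ rather than $t(l,k-r)$.
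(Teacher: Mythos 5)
Your proposal is correct and follows essentially the same argument as the paper: the same ordering and initial-segment decomposition, the same sets $U_a$, $W_a$, $C_a$, the same use of the absence of $r$-in-robust vertices to pin each vertex's ``bad colour'' inside $C_a$, and the same $l_b$-level splitting to bound each class by $t(l,r)$. The only cosmetic differences are that you partition $U_a$ by the pair $(b_u, l_{b_u}(u))$ rather than covering it by the sets $U_{a,b}$ and then subdividing, and that you spell out the (implicit in the paper) case where no $(r-1)$-in-robust vertex exists.
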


\begin{proof}
    Let $T = (V, E)$ be an edge-coloured tournament with no $l$-flash, no $k$-rainbow, no $r$-in-robust vertex, and at most $k-1$ outgoing colours at each vertex. Order the vertices of $T$ in a way that maximises the number of forward edges, and pick a minimal initial segment $P \subs V$ of this ordering such that $T[P]$ contains an $(r-1)$-in-robust vertex $v$. As before, $\abs{P} \le g(l,k,r-1) + 1$ and if $U = \{ u \in V \st v u \in E \}$ denotes the out-neighbourhood of $v$, then $\abs{V \sm P} \le 2 \abs{U}$. For each $a \in c^+(v)$, define $U_a$, $W_a$, and $C_a$ as in the previous proof.

    Next, for every colour $b \in C_a$, define
    \[
        U_{a,b} = \{ u \in U_a \st \text{every $r$-rainbow ending at $u$ contains colour $b$} \}.
    \]
    By assumption, no vertex is $r$-in-robust, but for each $u\in U_a$ there is an $r$-rainbow using only the colours of $C_a$ ending at $u$. It follows that $U_a = \bigcup_{b \in C_a} U_{a,b}$. Finally, for every $m \in \{0, \dots, l-1\}$, let
    \[
        U_{a,b}^m = \{ u \in U_{a,b} \st l_b(u) = m \}.
    \]
    Now, if an edge $u w$ in $T[U_{a,b}^m]$ has colour $b$, then $l_b(u) < l_b(w)$ which gives a contradiction. Thus, if $T[U_{a,b}^m]$ contained an $r$-rainbow ending at some vertex $u$, then this rainbow would not use the colour $b$, contradicting $u \in U_{a,b}$. It follows that $T[U_{a,b}^m]$ contains no $r$-rainbow, so $\abs{U_{a,b}^m} \le t(l, r)$ and
    \[
        \abs{U_a} \le \sum_{b \in C_a} \abs{U_{a,b}} = \sum_{b \in C_a} \sum_{m \in \{0, \dots, l-1\}} \abs{U_{a,b}^m} \le \sum_{b \in C_a} \sum_{m \in \{0, \dots, l-1\}} t(l,r) = r l \cdot t(l,r).
    \]
    Since $v$ has at most $k-1$ outgoing colours, this implies $\abs{U} \le (k-1) r l \cdot t(l,r)$, and so
    \[
        \abs{V} = \abs{P} + \abs{V \sm P} \le \abs{P} + 2 \abs{U} \le g(l,k,r-1) + 1 + 2 (k-1) r l \cdot t(l,r). \qedhere
    \]
\end{proof}

Finally, to obtain numerical bounds on $t(l,k)$ and $g(l,k,r)$, we apply these recursive bounds repeatedly. This is handled by the following corollary, which proves \cref{thm:tupperbound} and gives an upper bound on $g(l,k,r)$ that will be useful in \cref{sec:boundstransitive}.

\begin{corollary}\label{cor:tgupperbounds}
    For every $\eps > 0$, there is $D_\eps$ such that for $r<k$
    \[
        t(l,k) \le D_\eps k (\log k)^{1+\eps} \cdot l^{k-1} \quad \text{and} \quad g(l,k,r) \le 4 D_\eps k^3 (\log k)^{1+\eps} \cdot l^r.
    \]
\end{corollary}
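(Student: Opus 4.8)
The plan is to derive the bound on $g$ from the bound on $t$ by summing the recursion of \cref{lem:grecursiveupperbound}, and to prove the bound on $t$ by strong induction on $k$ via \cref{lem:trecursiveupperbound}; the two fit together because $g(l,k,r-1)$ depends only on $t$-values at parameters $s \le r-1 < k$. We may assume $l \ge 2$, as $l = 1$ is trivial. For the $g$-step, note that every vertex is trivially $0$-in-robust, so $g(l,k,0) = 0$, and telescoping \cref{lem:grecursiveupperbound} gives
\[
    g(l,k,r) \le r + 2(k-1) l \sum_{s=1}^{r} s\, t(l,s).
\]
Substituting $t(l,s) \le D_\eps s (\log s)^{1+\eps} l^{s-1}$ (and $t(l,1) = 1$ for the degenerate term) and using that for $l \ge 2$ the sum $\sum_s s^2 (\log s)^{1+\eps} l^{s-1}$ lies within a factor $l/(l-1) \le 2$ of its final summand, I get $\sum_s s\, t(l,s) \le 2 D_\eps r^2 (\log r)^{1+\eps} l^{r-1}$. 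Since $r < k$, replacing $(k-1) r^2 (\log r)^{1+\eps}$ by $k^3 (\log k)^{1+\eps}$ and absorbing the additive $r$ yields the claimed $g(l,k,r) \le 4 D_\eps k^3 (\log k)^{1+\eps} l^r$. So in the induction step it suffices to bound $t(l,k)$, as the matching bound on $g(l,k,r)$ for $r < k$ then follows for free.

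For the inductive step I apply \cref{lem:trecursiveupperbound} with a carefully chosen $r$, writing $m = k - r$:
\[
    t(l,k) \le t(l,k-1) + g(l,k,r-1) + 1 + 2(k-1) l^r\, t(l,k-r).
\]
Substituting the inductive bounds on $t(l,k-1)$ and $t(l,k-r)$ together with the bound on $g(l,k,r-1)$ just derived, and dividing through by $l^{k-1}$, the four terms become (up to constants) $\tfrac{D_\eps (k-1)(\log k)^{1+\eps}}{l}$, $\tfrac{4 D_\eps k^3 (\log k)^{1+\eps}}{l^{m}}$, a harmless $+1$, and $2 D_\eps (k-1)\, m (\log m)^{1+\eps}$. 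The first is at most $\tfrac12 D_\eps k (\log k)^{1+\eps}$ since $l \ge 2$. The plan is to take $m$ to be the least positive integer with $l^{m} \ge 12 k^2$, so that the second term is at most $\tfrac13 D_\eps k (\log k)^{1+\eps}$. This forces $m = \lceil \log_l(12 k^2) \rceil = O(\log k)$, whence the crucial fourth term carries a factor $m (\log m)^{1+\eps} = O\bigl(\log k\, (\log\log k)^{1+\eps}\bigr) = o\bigl((\log k)^{1+\eps}\bigr)$ and so is $o\bigl(D_\eps k (\log k)^{1+\eps}\bigr)$. The contributions then total at most $D_\eps k (\log k)^{1+\eps}$ once $k$ is large, closing the induction.

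The main obstacle is precisely this choice of $r$: the term $g(l,k,r-1)$ shrinks as $m = k - r$ grows (it scales like $l^{r-1} = l^{k-1}/l^{m}$), whereas the polynomial factor of $2(k-1) l^r\, t(l,k-r)$ grows with $m$. Balancing them at $l^{m} \asymp k^2$, i.e.\ $m \asymp \log_l k$, is exactly what reduces the overhead to $k (\log k)^{1+\eps}$ rather than something exponential; verifying that the fourth term fits within the budget relies on $(\log\log k)^{1+\eps} = o\bigl((\log k)^{\eps}\bigr)$ and so holds only for $k$ beyond some $k_0(\eps)$. For the finitely many $k < k_0$ I instead use \cref{thm:tasymptotic}, which gives $t(l,k) \le (1 + 2/\sqrt{l})^k l^{k-1} = O_\eps(l^{k-1})$ and hence lies within the target after enlarging $D_\eps$; when $m = 1$ (large $l$) the factor $t(l,k-r) = t(l,1) = 1$ is an exact constant, so the fourth term is just $2(k-1) = o\bigl(k(\log k)^{1+\eps}\bigr)$ and needs no special treatment. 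The value $k = 1$ is handled by the usual convention that $\log k$ is read as $\max\{\log k, 1\}$. Choosing $D_\eps$ large enough to absorb these base cases and the constant $1$ completes the argument.
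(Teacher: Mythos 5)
Your proposal is correct and follows essentially the same route as the paper: strong induction on $k$, bounding $g(l,k,r)$ by unrolling the recursion of \cref{lem:grecursiveupperbound} with the inductive $t$-bounds (you telescope; the paper runs a short induction on $r$), then choosing $r$ so that $l^{k-r} \asymp k^2$ to balance the $g(l,k,r-1)$ and $2kl^r\,t(l,k-r)$ terms, with the finitely many $k < k_0(\eps)$ handled via \cref{thm:tasymptotic}. Your explicit treatment of the degenerate cases $m=1$ and $k=1$ (where $(\log m)^{1+\eps}$ vanishes) is in fact slightly more careful than the paper's.
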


\begin{proof}
    From \cref{lem:trecursiveupperbound} and \cref{lem:grecursiveupperbound}, we can obtain the inequalities
    \begin{align*}
        t(l,k) & \le t(l,k-1) + g(l,k,r-1) + 2 k l^r \cdot t(l,k-r) \text{ and} \\
        g(l,k,r) & \le g(l,k,r-1) + 2 k^2 l \cdot t(l,r)
    \end{align*}
    for $k \ge 2$ and $1 \le r < k$.
    
    Let $k_0\geq 2$ be such that $8 (3 \log k + 5) (\log(3 \log k + 5))^{1+\eps} \le (\log k)^{1+\eps}$ for all $k \ge k_0$. By \cref{thm:tasymptotic}, there is $D_\eps$ such that $t(l,k) \le D_\eps k (\log k)^{1+\eps} \cdot l^{k-1}$ for all $k < k_0$.

    We prove the corollary by induction on $k$. Assume that we have already proved it for all $k' < k$. We may also assume that $l \ge 2$ since the claim is trivial for $l = 1$. Then, for $r < k$, a simple induction on $r$ with base case $g(l,k,0) = 0$ yields
    \begin{align*}
        g(l,k,r) & \le g(l,k,r-1) + 2 k^2 l \cdot t(l,r) \\
        & \le 4 D_\eps k^3 (\log k)^{1+\eps} \cdot l^{r-1} + 2 D_\eps k^3 (\log k)^{1+\eps} \cdot l^r \stackrel{l \ge 2}{\le} 4 D_\eps k^3 (\log k)^{1+\eps} \cdot l^r.
    \end{align*}
    To bound $t(l,k)$, we may assume that $k \ge k_0$ by our choice of $D_\eps$. Note that
    \[
        t(l,k-1) \le D_\eps k (\log k)^{1+\eps} \cdot l^{k-2} \stackrel{l \ge 2}{\le} \frac{D_\eps k (\log k)^{1+\eps} \cdot l^{k-1}}{2}.
    \]
    Choose
    \[
        r = \floor*{k - \frac{2 \log k + \log 16}{\log l}},
    \]
    then
    \[
        g(l,k,r-1) \le 4 D_\eps k^3 (\log k)^{1+\eps} \cdot l^{k - 1 - \frac{2 \log k + \log 16}{\log l}} = \frac{D_\eps k (\log k)^{1+\eps} \cdot l^{k-1}}{4}.
    \]
    Also,
    \begin{align*}
        r & \ge k - \frac{2 \log k + \log 16}{\log l} - 1 \ge k - \frac{2 \log k + \log 16}{\log 2} - 1 \ge k - (3 \log k + 5) 
    \end{align*}
    and so, using the fact that $k \ge k_0$, we have
    \begin{align*}
        8 (k-r) (\log(k-r))^{1+\eps} \le 8 (3 \log k + 5) (\log(3 \log k + 5))^{1+\eps} \le (\log k)^{1+\eps}.
    \end{align*}
    It follows by induction that
    \[
        2 k l^r \cdot t(l,k-r) \le 2 k D_\eps (k-r) (\log(k-r))^{1+\eps} \cdot l^{k-1} \le \frac{D_\eps k (\log k)^{1+\eps} \cdot l^{k-1}}{4}.
    \]
    Combining, we obtain
    \begin{align*}
        t(l,k) & \le t(l,k-1) + g(l,k,r-1) + 2 k l^r \cdot t(l,k-r) \\
        & \le \l(\frac{1}{2} + \frac{1}{4} + \frac{1}{4}\r) D_\eps k (\log k)^{1+\eps} \cdot l^{k-1} \\
        &= D_\eps k (\log k)^{1+\eps} \cdot l^{k-1}. \qedhere
    \end{align*}
\end{proof}

\section{Upper bounds for transitive tournaments}
\label{sec:boundstransitive}

In this section we prove \cref{thm:fequality}, that is, we show that $f(l,k) = l^{k-1}$ if $l$ is sufficiently large in terms of $k$. We will follow a similar strategy to that used by Lefmann, R\"{o}dl, and Thomas \cite{LRT1992} when they showed this equality for $l \ge (3 k)^{2 k}$, but our bounds and arguments from the previous section will allow us to improve the dependence on $k$.

The idea of the proof is to find a \defn{strongly $(k-2)$-robust vertex}, that is, a vertex $v$ such that there exists a set $C$ of $k-1$ colours which contains all colours incident to $v$ and which has the property that for all $a\in C$, there is a $(k-2)$-rainbow with colour set $C \sm \{a\}$ that ends at $v$, and also one that starts at $v$. The existence of such a vertex will allow us to show that every $2$-flash in the entire tournament takes its colour from $C$, which will in turn allow us to reduce the problem to the $l = 2$ case, for which Lefmann, R\"{o}dl, and Thomas~\cite{LRT1992} have already shown that the result holds.

\begin{theorem}[Lefmann, R\"{o}dl, Thomas]\label{thm:t2equality}
    For all positive integers $k$, $f(2,k) = 2^{k-1}$.
\end{theorem}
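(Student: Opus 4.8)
The plan is to prove the upper bound $f(2,k)\le 2^{k-1}$ by strong induction on $k$, the matching lower bound $f(2,k)\ge 2^{k-1}$ being given by the construction described after \cref{conj:fequality}. So fix a colouring of $\binom{[n]}{2}$ on the vertex set $V=\{1,\dots,n\}$ with no $2$-flash and no $k$-rainbow; I must show $n\le 2^{k-1}$. The key reformulation is that having no $2$-flash is equivalent to $c^-(v)\cap c^+(v)=\emptyset$ for every vertex $v$: a common colour $a\in c^-(v)\cap c^+(v)$ is witnessed by $u<v<w$ with $c(uv)=c(vw)=a$, and conversely any $2$-flash produces such a colour. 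The base case $k=1$ is immediate, since $f(2,1)=1=2^0$.

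For a vertex $v$ let $d^-(v)$ and $d^+(v)$ denote the lengths of the longest rainbows ending at and starting at $v$ respectively; as there is no $k$-rainbow, both lie in $\{0,\dots,k-1\}$. Set
\[
    R=\set{v\in V: d^-(v)\le k-2} \qquad\text{and}\qquad S=\set{v\in V: d^+(v)\le k-2}.
\]
Any $(k-1)$-rainbow inside $R$ would end at a vertex of $R$, hence be a $(k-1)$-rainbow ending there in the whole graph, which is impossible; so the colouring induced on $R$ has no $(k-1)$-rainbow and (inheriting the condition) no $2$-flash, whence $\abs{R}\le f(2,k-1)=2^{k-2}$ by induction. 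Reversing the order of $[n]$ interchanges rainbows ending at $v$ with rainbows starting at $v$ while preserving both forbidden structures, so the same argument gives $\abs{S}\le 2^{k-2}$. It therefore suffices to prove that $V=R\cup S$, for then $n\le\abs{R}+\abs{S}\le 2^{k-1}$.

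Thus the main obstacle is to show that no vertex $v$ simultaneously has a $(k-1)$-rainbow ending at it, with colour set $A$, and a $(k-1)$-rainbow starting at it, with colour set $B$ (so $\abs{A}=\abs{B}=k-1$). The idea is to splice these two rainbows together at $v$ to produce a forbidden $k$-rainbow. The no-$2$-flash condition supplies the crucial leverage: the last colour of the incoming rainbow lies in $c^-(v)$ and the first colour of the outgoing rainbow lies in $c^+(v)$, so they differ, and extremality forces $c^+(v)\subseteq A$ and $c^-(v)\subseteq B$. If $A\cap B=\emptyset$ the concatenated walk through $v$ is already a $(2k-2)$-rainbow; in general I would absorb the collisions between $A$ and $B$ by rerouting through the forward edges that join the lower half $w_0,\dots,w_{k-2}$ of the incoming rainbow to the upper half $z_1,\dots,z_{k-1}$ of the outgoing one, choosing the splice point so as to skip repeated colours while retaining at least $k$ distinct ones. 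Proving that some splice always achieves $k$ distinct colours is the delicate technical heart, and is exactly where transitivity of the tournament (all these forward edges being present) is used. Conceptually, the value $2^{k-1}$ reflects an injection of $V$ into $\set{0,1}^{k-1}$ mirroring the extremal construction, the two sides of each colour class — bipartite by the no-$2$-flash condition — furnishing the binary choice at each of the $k-1$ coordinates.
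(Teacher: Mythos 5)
First, a point of comparison: the paper does not prove \cref{thm:t2equality} at all --- it is imported as a black box from Lefmann, R\"{o}dl, and Thomas~\cite{LRT1992} --- so your attempt must be judged on its own terms, and on those terms it has a genuine gap at its centre. The inductive frame is sound: $\abs{R},\abs{S}\le f(2,k-1)=2^{k-2}$ is correct (a $(k-1)$-rainbow inside $R$ is a $(k-1)$-rainbow in the whole graph ending in $R$, and order-reversal handles $S$), so everything rests on the claim $V=R\cup S$, i.e.\ that no vertex $v$ simultaneously admits a $(k-1)$-rainbow ending at it and one starting at it. This is precisely the step you do not prove. Note that the naive splice really does fail: if the incoming rainbow has colours $(p_1,p_2,p_3)$ in order and the outgoing one has colours $(p_2,p_3,q)$ with $q\notin\set{p_2,p_3}$, then every contiguous subwalk of length $4$ of the concatenation repeats a colour, even though the no-$2$-flash condition $p_3\neq p_2$ holds; so for $k=4$ no $k$-rainbow can be extracted from the two given walks alone. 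Your proposed remedy --- rerouting through the transitivity edges $w_iz_j$ --- is indeed where all the content lies (for $k=3$ one can check that the colour of $w_1z_1$ forces either a $2$-flash or a $3$-rainbow), but you give no argument that such a reroute always succeeds for general $k$, and you say yourself that this is ``the delicate technical heart''. A proof whose key claim is flagged as unproven is not a proof; moreover it is not even clear that $V=R\cup S$ holds in general, as opposed to merely being consistent with the extremal $\set{1,2}^{k-1}$ example.

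It is also worth knowing that the known proof takes a different route, one the paper itself points to: as recorded in \cref{sec:openproblems}, the crucial ingredient in Lefmann, R\"{o}dl, and Thomas's proof of $f(2,k)=2^{k-1}$ is the stronger inductive statement that every $2$-flash-free colouring on more than $2^{k-1}$ vertices contains a $k$-rainbow starting at the \emph{first} vertex (a strengthening which, the paper notes, already fails for $l=3$). That strengthened hypothesis is what makes their induction close. Your $R\cup S$ decomposition avoids it, but then needs the unproven splicing/rerouting lemma in its place; to complete your argument you would have to state and prove that lemma, or else switch to the first-vertex strengthening.
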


This final ingredient, \cref{thm:t2equality}, is the only part of the proof where we need our tournament to be transitive: our arguments will show that if $t(2,k)=2^{k-1}$ for all $k$, then \cref{thm:fequality} holds with $t(l,k)$ in place of $f(l,k)$. For this reason, we will state and prove most of the results in this section for arbitrary tournaments.

We begin by explaining how the fact that few colours appear in $2$-flashes allows us to reduce the problem to the $l = 2$ case.

\begin{lemma}\label{lem:fewcolflash}
    Let $1 \le m \le l$, let $T = (V, E)$ be an edge-coloured tournament containing no $l$-flash or $k$-rainbow, and assume that for every vertex $v$ the number of colours used by $m$-flashes containing $v$ is at most $c$. Then,
    \[
        \abs{V} \le t(m,k) \l(\frac{l}{m}\r)^c.
    \]
\end{lemma}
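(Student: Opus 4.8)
The plan is to adapt the probabilistic thinning argument of \cref{lem:cinoutcols}. For each colour I will randomly keep only a few of the possible ``flash lengths'', so that the surviving vertices induce a sub-tournament with no $m$-flash; since this sub-tournament still has no $k$-rainbow, the definition of $t(m,k)$ bounds its size, and a first-moment calculation then controls $\abs{V}$. The statistic I would use is the quantity $l_a(v)$ introduced earlier: for every colour $a$ it satisfies $l_a(v) \in \set{0, 1, \dotsc, l-1}$ (because $T$ has no $l$-flash) together with the monotonicity property $l_a(u) < l_a(v)$ whenever $uv$ is an edge of colour $a$.

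Concretely, I would choose independently for each colour $a$ a uniformly random $m$-element subset $W_a \subs \set{0, 1, \dotsc, l-1}$ and set
\[
    U = \{ v \in V \st l_a(v) \in W_a \text{ for every colour } a \text{ used by an } m\text{-flash containing } v \}.
\]
The membership condition is imposed only on the (at most $c$) colours used by $m$-flashes through $v$, and this is what keeps the survival probability large. The heart of the argument is to check that $T[U]$ has no $m$-flash: if $x_0, \dotsc, x_m$ were such a flash, of colour $a$, then $a$ would be used by an $m$-flash containing each $x_i$, so every $l_a(x_i)$ would lie in $W_a$; but monotonicity gives $l_a(x_0) < \dotsb < l_a(x_m)$, which are $m+1$ distinct elements of the $m$-element set $W_a$, a contradiction. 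As $T[U]$ inherits the absence of a $k$-rainbow from $T$, the definition of $t(m,k)$ yields $\abs{U} \le t(m,k)$.

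Finally I would compute the first moment. For any fixed value the probability that it lies in the uniformly random $m$-subset $W_a$ is exactly $m/l$, so by independence over colours, and since at most $c$ colours are used by $m$-flashes through $v$ while $m/l \le 1$,
\[
    \pr(v \in U) \ge \l(\frac{m}{l}\r)^{c}.
\]
Summing over $v$ gives $\ex\abs{U} \ge (m/l)^{c} \abs{V}$, and combining with $\abs{U} \le t(m,k)$ produces the claimed bound $\abs{V} \le t(m,k)(l/m)^{c}$. The point that needs care — and that really is the whole idea — is the counting in the middle step: $W_a$ must contain exactly $m$ values whereas an $m$-flash forces $m+1$ distinct ones, so the two cannot coexist, and the exponent in the probability must count only the colours used by $m$-flashes through $v$ rather than all colours incident to $v$, which is precisely why the constraint defining $U$ is restricted to those colours.
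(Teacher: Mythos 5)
Your proposal is correct and is essentially identical to the paper's proof: the paper also chooses a uniformly random $m$-subset $L_a \subseteq \{0,\dots,l-1\}$ per colour, defines $U$ by requiring $l_a(v)\in L_a$ only for colours $a$ used by $m$-flashes through $v$, derives $\abs{U}\le t(m,k)$ from the pigeonhole contradiction ($m+1$ distinct values of $l_a$ in an $m$-set), and finishes with the same first-moment bound $\pr(v\in U)\ge (m/l)^c$. The two points you flag as needing care are exactly the points the paper's argument turns on.
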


We will use a probabilistic argument similar to that used to prove \cref{lem:cinoutcols}. Write $c^{m\text{-flash}}(v)$ for the set of colours $a$ such that there exists an $m$-flash of colour $a$ containing~$v$.

\begin{proof}
    For every colour $a$, choose a subset $L_a \subs \{0, \dots, l-1\}$ of size $m$ uniformly and independently at random. Define
    \[
        U = \l\{ v \in V \st \forall a \in c^{m\text{-flash}}(v),\ l_a(v) \in L_a \r\},
    \]
    and note that $T[U]$ contains no $m$-flash $v_0, \dots, v_m$ in any colour $a$. Indeed, otherwise $l_a(v_0) < \dots < l_a(v_m)$ with $a \in c^{m\text{-flash}}(v_i)$ for every $i$, so $L_a$ has size at least $m+1$, which is a contradiction. Hence, $\abs{U} \le t(m,k)$. On the other hand, for every vertex $v$ we have $\pr(v \in U) = (m/l)^{\abs{c^{m\text{-flash}}(v)}} \ge (m/l)^c$, and so
    \[
        \abs{V} \l(\frac{m}{l}\r)^c \le \sum_{v \in V} \pr(v \in U) = \ex(\abs{U}) \le t(m,k),
    \]
    which implies that
    $
    \abs{V} \le t(m,k) \l({l}/{m}\r)^c$. \qedhere
\end{proof}

Next, we show how the existence of a strongly $(k-2)$-robust vertex implies that few colours occur in 2-flashes.

\begin{lemma}\label{lem:stronglyrobustflash}
    Let $k\geq 2$ and let $T=(V,E)$ be an edge-coloured tournament containing no $k$-rainbow. Suppose that $T$ contains a strongly $(k-2)$-robust vertex. Then at most $k-1$ colours occur in $2$-flashes.
\end{lemma}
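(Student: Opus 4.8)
The plan is to prove the stronger statement that every colour appearing in a $2$-flash belongs to the set $C$ of $k-1$ colours witnessing the strong $(k-2)$-robustness of $v$; since $\abs{C} = k-1$, this immediately gives the claimed bound. So I would fix an arbitrary $2$-flash, that is, a directed walk $x_0 \to x_1 \to x_2$ with $c(x_0 x_1) = c(x_1 x_2) = b$, and argue by contradiction that $b \in C$. Suppose instead that $b \notin C$.

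The key observation is that $C$ contains every colour incident to $v$, so no edge at $v$ has colour $b$; in particular, $b$ differs from the colour of the edge joining $v$ to the middle vertex $x_1$. I would then split into two cases according to the orientation of this edge in the tournament.

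First suppose $v \to x_1$, and write $a = c(v x_1) \in C$. By strong $(k-2)$-robustness there is a $(k-2)$-rainbow ending at $v$ whose colour set is exactly $C \sm \{a\}$. Appending the edge $v \to x_1$ (colour $a$) and then the edge $x_1 \to x_2$ (colour $b$) produces a directed walk whose $k$ edge-colours are the $k-2$ colours of $C \sm \{a\}$ together with $a$ and $b$; these are pairwise distinct because $b \notin C$, so this is a $k$-rainbow, contradicting the hypothesis. Symmetrically, if $x_1 \to v$ and $a = c(x_1 v) \in C$, then I would prepend the edges $x_0 \to x_1$ (colour $b$) and $x_1 \to v$ (colour $a$) to a $(k-2)$-rainbow \emph{starting} at $v$ with colour set $C \sm \{a\}$; the same colour count again yields a $k$-rainbow.

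Since both orientations lead to a contradiction, every colour occurring in a $2$-flash lies in $C$, and hence at most $k-1$ colours occur in $2$-flashes. The argument is short and I do not anticipate a serious obstacle; the only point requiring care is to use the appropriate half of the $2$-flash relative to $v$ (the outgoing half $x_1 \to x_2$ when $v$ precedes $x_1$, and the incoming half $x_0 \to x_1$ when $x_1$ precedes $v$) so that the concatenation with the robust rainbow is a genuine directed walk.
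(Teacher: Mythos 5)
Your proof is correct and follows essentially the same route as the paper: both argue by contradiction on a $2$-flash of colour $b \notin C$, split on the orientation of the edge between the strongly robust vertex and the middle vertex $x_1$, and extend (or prepend) the appropriate $(k-2)$-rainbow with colour set $C \sm \{a\}$ through that edge and half of the flash to build a $k$-rainbow. Your explicit remark that $C$ contains all colours incident to $v$, so the connecting edge's colour lies in $C$ and differs from $b$, is exactly the point the paper uses implicitly.
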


\begin{proof}
    Let $u$ be a strongly $(k-2)$-robust vertex in $T$, and let $C$ be the corresponding set of $k-1$ colours. Suppose for a contradiction that there exists a $2$-flash $v_0, v_1, v_2$ of some colour $b \notin C$ in $T$.
    Consider the case that the edge between $u$ and $v_1$ is directed towards $v_1$, and let $a\in C$ be the colour of that edge. Since $u$ is strongly $(k-2)$-robust, some $(k-2)$-rainbow with colour set $C\sm \{a\}$ ends at $u$. Extending this $(k-2)$-rainbow to $v_1$ and $v_2$ yields a $k$-rainbow in $T$, which is a contradiction. The case that the edge between $u$ and $v_1$ is directed towards $u$ is similar.
    Hence, no $2$-flash of a colour not in $C$ can exist in $T$. Since $\abs{C} = k-1$, the result follows.
\end{proof}

The last step will be to show that every sufficiently large tournament with no $l$-flash or $k$-rainbow contains a strongly $(k-2)$-robust vertex. The proof of this result is similar to the proofs of \cref{lem:trecursiveupperbound,lem:grecursiveupperbound}.

\begin{lemma}\label{lem:sizestronglyrobust}
     Let $k \ge 2$ and let $T = (V, E)$ be an edge-coloured tournament containing no $l$-flash and no $k$-rainbow with
     \[
        \abs{V} \ge t(l,k-1) + g(l,k,k-2) + 2 (k-1) (t(l,k-1) + 2 (k-1) l \cdot t(l,k-2)) + 2.
    \]
    Then $T$ contains a strongly $(k-2)$-robust vertex.
\end{lemma}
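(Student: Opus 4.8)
The plan is to assume that $T$ has no strongly $(k-2)$-robust vertex and to bound $\abs{V}$ from above by the quantity in the statement, following the template of \cref{lem:trecursiveupperbound,lem:grecursiveupperbound}. First I would set $R = \set{v \in V \st \text{no $(k-1)$-rainbow ends at $v$}}$, so that $\abs{R} \le t(l,k-1)$ and every vertex of $V \sm R$ has at most $k-1$ outgoing colours. Ordering $V \sm R$ to maximise the number of forward edges and letting $P \subs V \sm R$ be a minimal initial segment whose induced subtournament contains a $(k-2)$-in-robust vertex $v$, the definition of $g$ gives $\abs{P} \le g(l,k,k-2) + 1$ (if no such $v$ exists we are already done). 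By maximality of the ordering $v$ sends edges to at least half of the vertices after it, so its out-neighbourhood $U$ satisfies $\abs{V \sm (R \cup P)} \le 2\abs{U}$. For each outgoing colour $a$ of $v$ I fix, using $(k-2)$-in-robustness, a $(k-2)$-rainbow $W_a$ avoiding $a$ that ends at $v$, and let $C_a$ be the set of $k-1$ colours consisting of $a$ together with the colours of $W_a$; extending $W_a$ along the colour-$a$ edges shows that every $u \in U_a = \set{u \in U \st c(v u) = a}$ is the endpoint of a $(k-1)$-rainbow of colour set exactly $C_a$, and hence that every outgoing colour of such a $u$ lies in $C_a$.

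The crux is to bound $\abs{U_a} \le t(l,k-1) + 2(k-1) l \cdot t(l,k-2)$, which I would establish by a mirror image of the first step carried out inside $U_a$. Setting $R_a = \set{u \in U_a \st \text{no $(k-1)$-rainbow starts at $u$}}$ gives $\abs{R_a} \le t(l,k-1)$, and every remaining vertex has at most $k-1$ incoming colours while still carrying a $(k-1)$-rainbow of colour set $C_a$ ending at it. Ordering $U_a \sm R_a$ to maximise backward edges and taking the first vertex $u^{*}$ as an anchor, which by maximality receives edges from at least half of the vertices following it, it suffices to control the in-neighbourhood of $u^{*}$. Splitting this neighbourhood according to the colour $a'$ of the edge into $u^{*}$ (at most $k-1$ choices) and the value of $l_{a'}(\cdot)$ (at most $l$ choices), each part induces a subtournament with no $(k-2)$-rainbow: an edge of colour $a'$ inside a part would contradict the constancy of $l_{a'}$, and together with the edge into $u^{*}$ and the $(k-1)$-rainbows of colour set $C_a$ available at the vertices of $U_a$, any $(k-2)$-rainbow inside a part could be spliced into a $k$-rainbow. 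Each part therefore has at most $t(l,k-2)$ vertices, and collecting the factor $2$ from the half-of-followers estimate, the at most $k-1$ colours, and the at most $l$ flash-lengths yields $\abs{U_a \sm R_a} \le 2(k-1) l \cdot t(l,k-2)$ and hence the claimed bound on $\abs{U_a}$.

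Putting the pieces together, $\abs{V} \le \abs{R} + \abs{P} + 2\abs{U}$, and since $\abs{U} = \sum_{a \in c^+(v)} \abs{U_a}$ ranges over the at most $k-1$ outgoing colours of $v$, this is at most $t(l,k-1) + g(l,k,k-2) + 2(k-1)\bigl(t(l,k-1) + 2(k-1) l \cdot t(l,k-2)\bigr) + 2$, which is exactly the bound in the statement. So whenever $\abs{V}$ exceeds this quantity the tournament must contain a strongly $(k-2)$-robust vertex, namely the anchor $u^{*}$ with witnessing colour set $C_a$.

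The step I expect to be the main obstacle is making the two splicing arguments legitimate, that is, controlling colour \emph{sets} rather than merely "avoiding a single colour". Robustness and the sets $R$, $R_a$ only produce rainbows that omit a prescribed colour, whereas strong robustness --- and the spliced $k$-rainbows above --- require rainbows whose colour set is a prescribed $(k-2)$- or $(k-1)$-subset of $C_a$, and require the connecting edges (in particular the colour-$a$ edge $v u$ and the colour-$a'$ edge into $u^{*}$) not to clash with the colours already present. I would handle this by repeatedly invoking the absence of a $k$-rainbow: concatenating any candidate rainbow at a vertex with the $(k-1)$-rainbows of colour set $C_a$ terminating there forces the newly used colours back into $C_a$, and this is what simultaneously pins down the common colour set $C = C_a$ witnessing strong robustness at the anchor, verifies that all colours incident to it lie in $C$, and supplies the prescribed forward and backward $(k-2)$-rainbows for each deleted colour. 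Keeping this bookkeeping consistent across both directions, without inadvertently shortening a rainbow when a connecting colour already appears, is the delicate heart of the proof, and is precisely where the flash-length functions $l_b(\cdot)$ and the tournament (rather than transitive) arguments of \cref{sec:boundstournaments} do the work.
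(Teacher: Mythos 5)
Your opening paragraph matches the paper's proof exactly (the set $R$, the ordering of $V \sm R$, the minimal segment $P$, the $(k-2)$-in-robust vertex $v$, the sets $U_a$, the rainbows $W_a$, the colour sets $C_a$, and the fact that every outgoing colour of every $u \in U_a$ lies in $C_a$), but the step you yourself call the crux contains a genuine gap. Your parts $\{ u \st c(u u^*) = a',\ l_{a'}(u) = m \}$ need not be free of $(k-2)$-rainbows, and the proposed splicing cannot produce a $k$-rainbow: every edge of $T[U_a]$ has colour in $C_a$, the edge into $u^*$ has colour $a' \in C_a$, and the rainbows available at the vertices of $U_a$ have colour set exactly $C_a$, so any walk assembled from these ingredients uses only colours from the $(k-1)$-element set $C_a$ and can never have $k$ distinct colours. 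Concretely, a $(k-2)$-rainbow inside one of your parts merely yields, after appending the colour-$a'$ edge, yet another $(k-1)$-rainbow with colour set $C_a$ ending at $u^*$ --- which is entirely consistent with all hypotheses and contradicts nothing. (A $k$-rainbow would only emerge if the start of that rainbow had an incoming edge coloured outside $C_a$, and that is exactly what cannot be guaranteed.) For the same reason, your closing claim that the anchor $u^*$ is the strongly $(k-2)$-robust vertex is unsupported: nothing in your construction supplies, for each $b \in C_a$, a $(k-2)$-rainbow with colour set $C_a \sm \{b\}$ ending at $u^*$ \emph{and} one starting at $u^*$; indeed your argument needs the parts around $u^*$ to contain \emph{no} rainbows, which pulls in the opposite direction.

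The paper escapes this trap by defining the exceptional sets through the rainbow property itself rather than through edges into an anchor. Having chosen a single colour $a$ by pigeonhole with $\abs{U_a} \ge \abs{U}/(k-1)$, it sets $S = \{ u \in U_a \st \text{no $(k-1)$-rainbow in $T[U_a]$ starts at $u$} \}$ and, for each $b \in C_a$, lets $U_{a,b}$ (resp.\ $V_{a,b}$) be the set of $u \in U_a$ such that every $(k-2)$-rainbow in $T[U_a]$ ending (resp.\ starting) at $u$ contains $b$. Splitting $U_{a,b}$ by the value of $l_b$ gives pieces with no edge of colour $b$; a $(k-2)$-rainbow inside such a piece would avoid $b$ yet end at a vertex of $U_{a,b}$, a direct contradiction with the definition of $U_{a,b}$ --- no $k$-rainbow splicing is needed at this point. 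This gives $\abs{U_{a,b}}, \abs{V_{a,b}} \le l \cdot t(l,k-2)$ and $\abs{S} \le t(l,k-1)$, so the size hypothesis forces $U_a \sm (S \cup \bigcup_{b \in C_a} (U_{a,b} \cup V_{a,b}))$ to be non-empty, and any leftover vertex $u$ is strongly robust: $u \notin U_{a,b} \cup V_{a,b}$ hands you the required rainbows with colour set exactly $C_a \sm \{b\}$ in both directions, while $u \notin S$ pins the incoming colours of $u$ to $C_a$. Note that this last point needs $S$ to be defined with rainbows in $T[U_a]$, whose colour set is forced to be $C_a$; your $R_a$, with rainbows in $T$, only bounds the number of incoming colours by $k-1$ without confining them to $C_a$ --- an issue you flag in your final paragraph but do not resolve. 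The structural lesson is that the strongly robust vertex must be found as a leftover vertex of a decomposition indexed by forbidden colours, not as the anchor of a second ordering argument.
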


\begin{proof}
    Define
    \[
        R = \{ v \in V \st \text{no $(k-1)$-rainbow ends at $v$} \}.
    \]
    Then $\abs{R} \le t(l,k-1)$, and every vertex in $V \sm R$ has at most $k-1$ outgoing colours.

    Order the vertices in $V \sm R$ in a way that maximises the number of forward edges of $T[V \sm R]$, and pick a minimal initial segment $P \subs V \sm R$ of this ordering such that $T[P]$ contains a $(k-2)$-in-robust vertex $v$. Then $\abs{P} \le g(l,k,k-2) + 1$, and if $U = \{ u \in V \st v u \in E \}$ denotes the out-neighbourhood of $v$, we have $2 \abs{U} \ge \abs{V \sm (R \cup P)}$ and so
    \[
        2 \abs{U} \ge \abs{V} - \abs{R} - \abs{P} \ge 2 (k-1) (t(l,k-1) + 2 (k-1) l \cdot t(l,k-2)) + 1 
    \]
    which implies that
    \[
        \abs{U} \ge (k-1) (t(l,k-1) + 2 (k-1) l \cdot t(l,k-2)) + 1.
    \]
    For every colour $a \in c^+(v)$, let $U_a = \{ u \in U \st c(v u) = a \}$. Since $v$ has at most $k-1$ outgoing colours, there must exist a colour $a$ such that $\abs{U_a} \ge {\abs{U}}/{(k-1)}$ and so
    \[
        \abs{U_a} \ge t(l,k-1) + 2 (k-1) l \cdot t(l,k-2) + 1.
    \]
    Since $v$ is $(k-2)$-in-robust, we can choose a $(k-2)$-rainbow $W$ without colour $a$ that ends at $v$, and extend it to each of the vertices in $U_a$ by the edges of colour $a$. Let $C$ be the set containing $a$ and the $k-2$ colours appearing in $W$. Then, every outgoing colour of every vertex $u \in U_a$ must be a colour from $C$. In particular, all edges in $T[U_a]$ have one of these $k-1$ colours.

    Define
    \[
        S = \{ u \in U_a \st \text{no $(k-1)$-rainbow in $T[U_a]$ starts at $u$} \}.
    \]
    Then $\abs{S} \le t(l,k-1)$. For every vertex $u \in U_a \sm S$, some $(k-1)$-rainbow in $T[U_a]$ starts at $u$. Since every edge in $T[U_a]$ has a colour from $C$, this $(k-1)$-rainbow uses only colours from $C$, and so every incoming colour of $u$ in $T$ must be a colour from $C$.
    
    Next, for every colour $b \in C$, define
    \begin{align*}
        & U_{a,b} = \{ u \in U_a \st \text{every $(k-2)$-rainbow in $T[U_a]$ ending at $u$ contains colour $b$} \} \text{ and} \\
        & V_{a,b} = \{ u \in U_a \st \text{every $(k-2)$-rainbow in $T[U_a]$ starting at $u$ contains colour $b$} \}.
    \end{align*}
    For every $m \in \{0, \dots, l-1\}$, let
    \[
        U_{a,b}^m = \{ u \in U_{a,b} \st l_b(u) = m \}.
    \]
    If an edge $u w$ in $T[U_{a,b}^m]$ has colour $b$, then $l_b(u) < l_b(w)$ which gives a contradiction. Thus, $T[U_{a,b}^m]$ contains no $(k-2)$-rainbow, so $\abs{U_{a,b}^m} \le t(l,k-2)$ and $\abs{U_{a,b}} \le l \cdot t(l,k-2)$. Similarly, $\abs{V_{a,b}} \le l \cdot t(l,k-2)$.

    Finally, consider $U_a' = U_a \sm (S \cup \bigcup_{b \in C} (U_{a,b} \cup V_{a,b}))$, so
    \[
        \abs{U_a'} \ge \abs{U_a} - \abs{S} - \sum_{b \in C} (\abs{U_{a,b}} + \abs{V_{a,b}}) \ge 1.
    \]
    Let $u \in U_a'$ be arbitrary. For every colour $b \in C$, there is a $(k-2)$-rainbow in $T[U_a]$ without colour $b$ that ends at $u$, and also one that starts at $u$. Since every edge in $T[U_a]$ has a colour from $C$, the colour set of these $(k-2)$-rainbows is $C \sm \set{b}$. This shows that $u$ is strongly $(k-2)$-robust in $T$.
\end{proof}

We can now combine the preceding three lemmas and insert the bounds from the previous section to prove the following.

\begin{corollary}\label{cor:tboundwitht2}
    For every $\eps > 0$, there is $C_\eps$ such that for all $k \ge 2$,
    \[
        t(l,k) \le \max\l\{C_\eps k^3 (\log k)^{1+\eps} \cdot l^{k-2}, t(2,k) \l(\frac{l}{2}\r)^{k-1}\r\}.
    \]
\end{corollary}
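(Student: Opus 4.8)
The plan is to fix an edge-coloured tournament $T = (V,E)$ containing no $l$-flash and no $k$-rainbow, bound $\abs{V}$, and recall that since $t(l,k)$ is the maximum number of vertices of such a tournament, any bound valid for every such $T$ bounds $t(l,k)$. I would split into two cases according to whether or not $T$ contains a strongly $(k-2)$-robust vertex; these two cases produce exactly the two terms of the maximum. I may also assume $l \ge 2$, the case $l = 1$ being trivial since then $t(1,k) = 1$ and the first term of the maximum already dominates.

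In the first case, where $T$ has no strongly $(k-2)$-robust vertex, the contrapositive of \cref{lem:sizestronglyrobust} forces
\[
    \abs{V} \le t(l,k-1) + g(l,k,k-2) + 2(k-1)\l(t(l,k-1) + 2(k-1) l \cdot t(l,k-2)\r) + 1.
\]
I would then substitute the bounds from \cref{cor:tgupperbounds}, namely $t(l,k') \le D_\eps k' (\log k')^{1+\eps} l^{k'-1}$ and $g(l,k,k-2) \le 4 D_\eps k^3 (\log k)^{1+\eps} l^{k-2}$. Each summand then has order $k^3 (\log k)^{1+\eps} l^{k-2}$, the governing contributions being $g(l,k,k-2)$ and the term $4(k-1)^2 l \cdot t(l,k-2)$; in the latter the extra factor of $l$ is absorbed by the drop from $l^{k-2}$ to $l^{k-3}$ in $t(l,k-2)$, leaving it $\calO(k^3 (\log k)^{1+\eps} l^{k-2})$. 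Summing, $\abs{V} \le C_\eps k^3 (\log k)^{1+\eps} l^{k-2}$ for a suitable $C_\eps$ depending only on $D_\eps$ and absolute constants.

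In the second case, where $T$ contains a strongly $(k-2)$-robust vertex, \cref{lem:stronglyrobustflash} shows that at most $k-1$ colours occur in $2$-flashes of $T$ in total; in particular, for every vertex $v$ the number of colours used by $2$-flashes containing $v$ is at most $k-1$. Applying \cref{lem:fewcolflash} with $m = 2$ and $c = k-1$ (valid since $2 \le l$) gives directly $\abs{V} \le t(2,k) (l/2)^{k-1}$, the second term of the maximum. Combining the two cases, every such $T$ satisfies $\abs{V} \le \max\{C_\eps k^3 (\log k)^{1+\eps} l^{k-2},\, t(2,k) (l/2)^{k-1}\}$, which is the claimed bound on $t(l,k)$.

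The only slightly delicate point is the estimation in the first case: one must check that each of the four summands is $\calO(k^3 (\log k)^{1+\eps} l^{k-2})$, which depends on the bound for $g(l,k,k-2)$ being the dominant term and on the cancellation of one power of $l$ in $2(k-1)^2 l \cdot t(l,k-2)$. No genuinely new idea is needed beyond assembling the three preceding lemmas with the numerical bounds of \cref{cor:tgupperbounds}; the substantive work has already been carried out in \cref{lem:fewcolflash,lem:stronglyrobustflash,lem:sizestronglyrobust}.
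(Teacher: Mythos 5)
Your proposal is correct and takes essentially the same approach as the paper: the paper fixes $C_\eps = 12 D_\eps$, assumes $\abs{V} \ge C_\eps k^3 (\log k)^{1+\eps} \cdot l^{k-2}$, verifies via the bounds of \cref{cor:tgupperbounds} that the hypothesis of \cref{lem:sizestronglyrobust} holds, and then applies \cref{lem:stronglyrobustflash,lem:fewcolflash} to conclude $\abs{V} \le t(2,k)(l/2)^{k-1}$. Your case split on whether a strongly $(k-2)$-robust vertex exists is just the contrapositive formulation of the same argument, with identical lemmas and identical numerical substitutions.
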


\begin{proof}
    Let $D_\eps$ be given by \cref{cor:tgupperbounds}, and let $C_\eps = 12 D_\eps$. Suppose that $T = (V, E)$ is an edge-coloured tournament containing no $l$-flash and no $k$-rainbow with $\abs{V} \ge C_\eps k^3 (\log k)^{1+\eps} \cdot l^{k-2}$. Then, we have
    \begin{align*}
        \abs{V} \ge {} & 12 D_\eps k^3 (\log k)^{1+\eps} \cdot l^{k-2} \\
        \ge {} & 4 D_\eps k^3 (\log k)^{1+\eps} \cdot l^{k-2} + 2 k \cdot D_\eps k (\log k)^{1+\eps} \cdot l^{k-2} + 4 k^2 l \cdot D_\eps k (\log k)^{1+\eps} \cdot l^{k-3} + 2 \\
        \ge {} & g(l,k,k-2) + (2 (k-1) + 1) t(l,k-1) + 4 (k-1)^2 l \cdot t(l,k-2) + 2 \\
        = {} & t(l,k-1) + g(l,k,k-2) + 2 (k-1) (t(l,k-1) + 2 (k-1) l \cdot t(l,k-2)) + 2.
    \end{align*}
    Hence, by \cref{lem:sizestronglyrobust}, $T$ contains a strongly $(k-2)$-robust vertex. Applying \cref{lem:stronglyrobustflash} yields that at most $k-1$ colours appear in $2$-flashes in $T$, and the result now follows from \cref{lem:fewcolflash}.
\end{proof}

In particular, for transitive tournaments, the preceding corollary tells us that $f(l,k) \le \max\{C_\eps k^3 (\log k)^{1+\eps} \cdot l^{k-2}, f(2,k) (l/2)^{k-1}\}$. We can now prove \cref{thm:fequality}.

\begin{proof}[Proof of \cref{thm:fequality}]
    Let $C_\eps$ be given by \cref{cor:tboundwitht2}. The result is trivial for $k=1$ so assume that $k\geq 2$. If $l \ge C_\eps k^3 (\log k)^{1+\eps}$, then $C_\eps k^3 (\log k)^{1+\eps} \cdot l^{k-2} \le l^{k-1}$, and by \cref{thm:t2equality} we know that $f(2,k) (l/2)^{k-1} = l^{k-1}$. Therefore, \cref{cor:tboundwitht2} implies that $f(l,k) \le l^{k-1}$.
\end{proof}

\section{Colourings of arbitrary tournaments}
\label{sec:colouringtournaments}

In this section, we ask how small a tournament can be while still having the property that every colouring of its edges contains an $l$-flash or a $k$-rainbow. For $k = 2$, any non-transitive tournament is an example of this: if the tournament has a cycle, this cycle either contains two consecutive edges with different colours, creating a $2$-rainbow, or the cycle is monochromatic, creating flashes of arbitrary lengths. For $l\geq 2$ and $k \ge 3$, we show that there are tournaments on fewer than $f(l,k)$ vertices with the desired property.

\begin{proof}[Proof of \cref{thm:tworse}]
    Set $n = f(l,k)$ and let $T$ be the ``increasing'' transitive tournament on $[n - 1]$ with the edge from $1$ to $n - 1$ reversed, that is,
    \[
        T = ([n - 1], \{ u v \st 1 \le u < v \le n - 1,\ u v \neq (1, n - 1) \} \cup \{(n - 1, 1) \}).
    \]
    Suppose that $T$ has an edge colouring $c$ with neither $l$-flashes nor $k$-rainbows. Define an edge colouring $c'$ of the ``increasing'' transitive tournament $T'$ on the vertex set $\{0\}\cup [n]$ as follows. For any edge $u v$ of $T$ with $u v \neq (n - 1, 1)$, let $c'(u v) = c(u v)$. Next, for any $v \in [n - 1]$, let $c'(0, v)$ be the colour of any incoming edge of $v$ in $T$, and let $c'(v, n)$ be the colour of any outgoing edge of $v$ in $T$. Finally, let $c'(1, n - 1)$ and $c'(0,n)$ be distinct and entirely new colours.

    Note first that the edge $(0,n)$ is not contained in an $l$-flash or $k$-rainbow since it is in no walk of length greater than $1$.
    Next, the edge $(1, n - 1)$ is not contained in an $l$-flash in $T'$ since there are no other edges with the same colour. It is also not contained in a $k$-rainbow since the only $3$-walk containing that edge is $0, 1, n - 1, n$, but $c'(0,1) = c(n-1,1) = c'(n-1,n)$. On the other hand, for every walk $w$ in $T'$ containing neither $(0,n)$ nor $(1, n-1)$, there is a walk in $T$ with the same colours. Indeed, if $w$ uses the edge $(0, v)$ for $v\in[n-1]$, then by the construction of $c'$ this can be replaced by an edge of $T$ incident to $v$ of the same colour, and the same holds for edges $(v, n)$. Since $T$ contains no $l$-flash and no $k$-rainbow, it follows that $T'$ also contains no $l$-flash and no $k$-rainbow. However, $T'$ is a transitive tournament with $n + 1 = f(l,k) + 1$ vertices, contradicting the definition of $f(l,k)$.
\end{proof}

In the positive direction, we prove that all tournaments with $\calO(l^{k-2} / \sqrt{k})$ vertices can be coloured in a way that avoids $l$-flashes and $k$-rainbows. Our construction is very similar to the construction from the introduction which showed that $f(l,k) \ge l^{k-1}$. However, it only uses those strings whose entries sum to a fixed value. This ensures that no matter how an edge is directed, we can always pick an index where the first string has a smaller value than the second string.

\begin{proof}[Proof of \cref{thm:tlowerbound}]
    Let
    \begin{equation*}
        X = \l\{x \in [l]^{k - 1} \st \sum_{i = 1}^{k-1} x_i = \floor*{\frac{l (k-1)}{2}}\r\}
    \end{equation*}
    which is the largest antichain in the grid poset~\cite{antichain}. A result of Anderson~\cite{Anderson1969} says that there is a constant $C > 0$ (independent of $l$ and $k$) such that $\abs{X} \geq C l^{k - 2}/\sqrt{k}$. Let $T = (V, E)$ be a tournament with at most $C l^{k - 2}/\sqrt{k}$ vertices.
    
    As $\abs{V} \leq \abs{X}$, we can assign to every vertex $v$ a unique $x(v) \in X$. Define an edge colouring of $T$ by picking, for an edge $u v \in E$, a colour $c(u v) \in [k-1]$ such that $x(u)_{c(u v)} < x(v)_{c(u v)}$. By the construction of $X$, this is always possible.

    Since this edge colouring uses at most $k - 1$ colours, there is no $k$-rainbow. If there were an $l$-flash $v_0, \dots, v_l$ of colour $a$, then $x(v_0)_a < x(v_1)_a < \dots < x(v_l)_a$, implying $x(v_l)_a > l$, which is a contradiction. Thus this gives an edge colouring of $T$ without $l$-flashes and $k$-rainbows.
\end{proof}

With additional work, it is possible to slightly strengthen \cref{thm:tlowerbound}. We sketch the argument, which combines our proof with an argument from Buci\'{c}, Letzter, and Sudakov~\cite{BLS2019} and improves the bound to $C l^{k-2} (\log l)^{1/(k-1)} / \sqrt{k}$. The idea is to partition a tournament of size $n$ into $\calO(n / \log l)$ transitive tournaments of size $\calO(\log l)$ each. From the introduction, we know that each of the transitive tournaments can be coloured with $k-1$ colours and no $a$-flash if $a^{k-1} = \Omega(\log l)$. Moreover, the previous proof provides a way to colour the edges between the transitive tournaments with $k-1$ colours and no $b$-flash provided $b^{k-2} / \sqrt{k} = \Omega(n / \log l)$. The tournament will then contain no $(a b)$-flash, so we need $a b \le l$, and this is satisfied if $n = \calO(l^{k-2} (\log l)^{1/(k-1)} / \sqrt{k})$.

\section{Open problems}\label{sec:openproblems}

We have proved that Lefmann, R\"{o}dl, and Thomas's conjecture, \cref{conj:fequality}, holds for $l \ge k^3 (\log k)^{1+o(1)}$, but the conjecture remains open when $l$ is small compared to $k$. The case $l = 3$ (that is, proving $f(3, k) = 3^{k - 1}$) is already of significant interest. In this setting, there are edge-coloured transitive tournaments with more than $l^{k-1}$ vertices that contain no $l$-flash and no $k$-rainbow starting at the first vertex. The existence of $k$-rainbows starting at the first vertex was a crucial ingredient in Lefmann, R\"{o}dl, and Thomas's proof that $f(2, k) = 2^{k - 1}$.

\Cref{conj:tequality} remains wide open for any $l \ge 2$ and $k \ge 3$. Establishing this conjecture for $l = 2$ would be very useful as our results would then imply that the conjecture also holds for $l \geq k^3 (\log k)^{1 + o(1)}$.

In terms of general upper bounds on $f(l,k)$ and $t(l,k)$, an interesting next step would be to reduce the gap between the upper and the lower bounds to a multiplicative factor that is sublinear in $k$. All of our arguments relied on certain vertices having at most $k-1$ outgoing colours, which always added a factor of at least $k$ to our upper bounds.

\begin{problem}
    For positive integers $l$ and $k$, is it true that $f(l,k) = o(k) \cdot l^{k-1}$\textup{?}
\end{problem}

We have also constructed a tournament with $f(l,k)-1$ vertices, every edge colouring of which contains an $l$-flash or a $k$-rainbow. Are there tournaments with $o(f(l,k))$ vertices that satisfy this? We believe random tournaments are good candidates.

\begin{problem}
    What is the minimal $n$ such that, for a uniformly random tournament on $n$ vertices, with high probability every edge colouring contains an $l$-flash or a $k$-rainbow\textup{?}
\end{problem}

Finally, we defined flashes and rainbows in tournaments to be walks, but it is possible to consider the same problem with paths instead. This adds some technical difficulties. For example, when we split a set of vertices according to the longest flash of colour $a$ ending in those vertices, it is no longer guaranteed that each resulting set contains no edge of colour $a$: the vertices in a directed cycle of colour $a$ could all end up in the same set. However, \cref{conj:tequality} could still hold in this more restrictive setting.

\begin{conjecture}
    Let $l$ and $k$ be positive integers and let $T$ be a tournament with $l^{k-1}+1$ vertices. Then every edge colouring of $T$ contains a directed monochromatic path of length $l$ or a directed rainbow path of length $k$.
\end{conjecture}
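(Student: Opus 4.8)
The plan is to transfer the walk-based machinery of \cref{sec:boundstournaments,sec:boundstransitive} wholesale to the path setting, after first isolating where walks and paths genuinely differ. The key observation is that in a transitive tournament---which is acyclic---every directed walk is automatically a path, since a repeated vertex would close up a directed cycle. Hence the restriction of this conjecture to transitive tournaments is exactly \cref{conj:fequality}, and all the additional content lies in non-transitive tournaments; moreover it is precisely the cycles created by non-transitivity that will cause trouble. Every argument in \cref{sec:boundstournaments,sec:boundstransitive} is built from one recurring device: for each colour $a$ and vertex $v$ one records $l_a(v)$, the length of the longest monochromatic colour-$a$ flash ending at $v$, and uses the monotonicity that $l_a(u) < l_a(v)$ whenever $uv$ has colour $a$. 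I would replace $l_a(v)$ by $p_a(v)$, the length of the longest monochromatic colour-$a$ \emph{path} ending at $v$, which still lies in $\{0,\dots,l-1\}$ because a colour-$a$ path of length $l$ is a forbidden $l$-flash.

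With $p_a$ in place of $l_a$, the first step is to check that the rainbow-extension arguments underlying \cref{lem:trecursiveupperbound,lem:grecursiveupperbound,lem:sizestronglyrobust} produce paths rather than mere walks. Each rainbow is assembled by attaching a rainbow ending at a robust vertex to an edge and then to a rainbow starting at its head; I would verify that at each such step the few ($\calO(k)$) previously used vertices can be avoided by the same greedy choice, so that concatenation yields a genuine path at the cost of discarding a bounded set of vertices. The reduction to the base case should be similarly robust: \cref{lem:fewcolflash} would be applied with $m=2$, and for $l=2$ a monochromatic walk of length two in a tournament is already a path (its endpoints cannot coincide), so the only genuinely new ingredient at the base is the path analogue of \cref{thm:t2equality}, a natural and presumably more tractable sub-goal.

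The heart of the matter, and the step I expect to be the main obstacle, is that the monotonicity of $p_a$ fails exactly when every longest colour-$a$ path ending at $u$ is forced through $w$---that is, when $u$ and $w$ lie in a common monochromatic directed cycle. Consequently a partition class such as the $U_{a,b}^m$ of \cref{lem:grecursiveupperbound} may now contain colour-$a$ edges, but only \emph{inside} strongly connected components of the colour-$a$ digraph, so $T[U_{a,b}^m]$ is no longer a smaller instance. To repair this I would pass to the condensation: order the colour-$a$ strongly connected components topologically and refine $p_a(v)$ into the pair $(\rho_a(v),\pi_a(v))$, where $\rho_a(v)$ is the rank of the component containing $v$ and $\pi_a(v)$ is the length of the longest colour-$a$ path ending at $v$ \emph{within} that component. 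Every colour-$a$ edge $uw$ then satisfies $\rho_a(u)\le\rho_a(w)$, with strict inequality unless $u$ and $w$ share a component, in which case $\pi_a(u)<\pi_a(w)$; partitioning simultaneously by $\rho_a$ and $\pi_a$ recovers classes free of colour-$a$ edges. The delicate point is that along any colour-$a$ path the increments of $\rho_a$ together with the within-component progress of $\pi_a$ must total at most $l-1$, so the combined range of $(\rho_a,\pi_a)$ should again be governed by $l$ rather than by $l^2$. Making this accounting exactly tight is essential, because the target $l^{k-1}$ is extremal: any slack introduced while taming the cycles would inflate the base of the exponent.

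Finally, honesty compels the observation that a complete unconditional proof is almost certainly out of reach by these means. A directed monochromatic path is in particular a monochromatic walk, and likewise for rainbows, so this conjecture implies \cref{conj:tequality}, which is open even for $l=2$. A realistic goal is therefore to establish the path analogues of what we can already prove for walks: path versions of \cref{thm:tupperbound,thm:tasymptotic} up to the same constants, and a path version of \cref{thm:fequality} valid for $l \ge k^3(\log k)^{1+o(1)}$. The large-$l$ regime is the most promising, since there every monochromatic path, and hence every monochromatic cycle, is short relative to $l$, so the $(\rho_a,\pi_a)$ refinement disturbs the leading term only negligibly and the extremal equality $f(l,k)=l^{k-1}$ can plausibly be matched in the tournament setting.
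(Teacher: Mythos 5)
The statement you have addressed is the paper's closing conjecture in \cref{sec:openproblems}; the paper offers no proof of it, and your write-up, as you yourself concede in your final paragraph, is not a proof either. So the question is whether your programme could plausibly be completed, and it cannot, for two concrete reasons. First, it bottoms out at statements that are at least as hard as what is to be proved. Since directed paths are in particular directed walks, the conjecture implies \cref{conj:tequality}, which the paper notes is wide open for all $l \ge 2$ and $k \ge 3$. Your intended base case --- the ``path analogue of \cref{thm:t2equality}'' for general tournaments --- is precisely such a statement: for $l = 2$ monochromatic walks are automatically paths (tournaments have no digons), but rainbow paths are harder to find than rainbow walks, so your base case formally implies $t(2,k) = 2^{k-1}$, i.e.\ \cref{conj:tequality} at $l = 2$, which the paper explicitly singles out as open. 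Calling it ``presumably more tractable'' obscures the fact that your reduction replaces one open conjecture by another, formally stronger, open conjecture.

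Second, the technical device you propose to repair monotonicity is itself broken. You claim that for a colour-$a$ edge $uw$ inside a strongly connected component of the colour-$a$ digraph one has $\pi_a(u) < \pi_a(w)$, where $\pi_a$ is the length of the longest colour-$a$ path ending at the vertex within the component. This fails for exactly the reason that monotonicity of $p_a$ fails: the longest within-component path ending at $u$ may pass through $w$. Indeed no repair of this shape is possible, since any vertex function that strictly increases along every edge of a digraph certifies acyclicity, while a non-trivial strongly connected component contains a cycle; so your classes $U_{a,b}^m$ can still contain colour-$a$ edges and the recursion does not close, no matter how the ``accounting'' is arranged. (If all one needs is a partition of $V$ into $l$ classes spanning no colour-$a$ edge, the right tool is the Gallai--Roy theorem: a digraph with no directed path of $l$ edges has underlying chromatic number at most $l$. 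But this loses the finer structure exploited in \cref{lem:cinoutcols}, where the values $0$ and $l-1$ of $l_a$ are tied to incoming and outgoing colours, and in any case the base case above remains open.) What is salvageable is the modest goal of your last paragraph --- path analogues of the upper bounds \cref{thm:tasymptotic,thm:tupperbound} and of \cref{thm:fequality} --- but those are weaker statements than the conjecture, and establishing even them would require overcoming the obstruction just described.
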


{
\fontsize{11pt}{12pt}
\selectfont
	
\hypersetup{linkcolor={red!70!black}}
\setlength{\parskip}{2pt plus 0.3ex minus 0.3ex}

}
\end{document}